\renewcommand{\leq}{\leqslant}
\renewcommand{\geq}{\geqslant}
\newcommand{\R}{{\mathbb{R}}}
\newcommand{\e}{\varepsilon}
\newcommand{\PP}{\mathbb{P}}
\newcommand{\EE}{{\mathbb E}}
\newcommand{\supp}{{\rm supp}}
\newcommand{\dd}{{\rm d}}
\theoremstyle{definition}
\newtheorem{mydef}{Definition}[section]
\newtheorem{proposition}[mydef]{Proposition}
\newtheorem{theorem}[mydef]{Theorem}
\newtheorem{assumption}[mydef]{Assumption}
\newtheorem{lemma}[mydef]{Lemma}
\title{Averaging for some simple constrained Markov processes.}
\author{Alexandre Genadot}              
\date{Institut de Math\'ematiques de Bordeaux and Inria Bordeaux -- Sud Ouest, Team CQFD}
\begin{document}
\maketitle
\begin{abstract}
In this paper, a class of  piecewise deterministic Markov processes with underlying fast dynamic is studied. Using a ``penalty method'', an averaging result is obtained when the underlying dynamic is infinitely accelerated. The features of the averaged process, which is still a piecewise deterministic Markov process, are fully described. 
\end{abstract}

\section{Introduction}

This paper studies some simple constrained Markov processes through averaging. Their trajectories consist in a piecewise linear motion, whose slopes are positives and given by the values of a continuous time Markov chain with countable state space. The piecewise linear process is constrained to stay above some boundary by instantaneous downward jumps when hitting the boundary. This describes a very particular class of piecewise deterministic Markov processes in the sense of \cite{Davis93}. We are interested in the limit behavior of the process when the dynamic of the underlying celerity process, that is the dynamic of the underlying continuous time Markov chain, is infinitely accelerated. We are thus in the framework of averaging for Markov processes.
\medbreak
Averaging for unconstrained Markov process, that is without the presence of a boundary, has been studied by several authors since decades and is well understood for a rich variety of Markov processes, see for example \cite{Kurtz92,PS08,YZ06} and references therein. As far as we know, averaging for constrained Markov processes, that is with the presence of a boundary, is not as well understood, in particular in the description of the averaging measure at the boundary. However, in \cite{Kurtz90}, the author proposes a general method for the study of general constrained Markov processes, the so called ``patchwork martingale problem''. For example, this method has been applied recently in \cite{CK15} to reflected diffusions. We adopt in this paper a more standard approach, at least in our point of view, which is the ``penalty method'', exposed in \cite[Section 6.4]{Kurtz90}. This method consist in considering a penalized process jumping at fast rate when beyond the boundary rather than  a process jumping instantaneously at the boundary. Then, a time change is performed in order to sufficiently slow down the dynamic of the penalized process when beyond the boundary, allowing the application of classical limit theorems for Markov processes.\medbreak
In Section \ref{sec:plmp}, piecewise linear Markov processes are presented. Our main averaging result is stated in Section \ref{sec:av:plmp}. In our case, the averaged process can be fully described. In particular, the expression for the averaging measure at the boundary, describing the behavior of the limit process at the boundary, is explicitly given in terms of the features of the process. By an appropriate change of variable, this allows us, in Section \ref{sec:av:gen}, to apply this averaging result to a more general class of piecewise deterministic process than piecewise linear. As an example, a hybrid version of a classical model for the neural dynamic is considered in Section \ref{sec:quad}. Proofs are differed to Section \ref{sec:proofs}.

\section{Model and main results}

\subsection{A piecewise linear Markov process}\label{sec:plmp}

All our random variables and processes are defined on a same probability space $(\Omega, \mathcal{F},\PP)$ with associated expectation denoted by $\EE$. Convergence in law for processes is intended to take place in the Skorokhod space of càdlàg processes $\mathbb{D}([0,T],\R)$, with some finite horizon time $T$, endowed with its usual topology, see \cite[Section 12, Chapter 3]{Bil13}.

Let $c$ be a real representing some threshold or boundary. We are going to describe, at first in an algorithmic fashion, the dynamic of a stochastic process $(X(t),t\in[0,T])$ valued in $(-\infty,c)$ endowed with its Borel algebra ${\cal B}(-\infty,c)$:
\begin{enumerate}
\item{\bf Initial state:} At time $T^\ast_0=0$, the process starts at $X(T^\ast_0)=\xi_0$, a random variable with law with support included in $(-\infty,c)$.
\item{\bf First jumping time:} Let $Y$ be a continuous time Markov chain valued in a countable space ${\cal{Y}}\subset(0,\infty)$. This chain starts at $Y(0)=\zeta$, a $\cal{Y}$-valued random variable. The first hitting time of the boundary occurs at time $T^\ast_1$ defined as
$$
T^\ast_1=\inf\left\{t>0~;~\xi_0+\int_{T^\ast_0}^{t} Y(s)\dd s=c\right\}.
$$
As usually, we set $\inf\emptyset=+\infty$.
\item {\bf Piecewise linear motion:} For $t\in[T^\ast_0,T^\ast_1)$, we set
$$
X(t)=\xi_0+\int_{T^\ast_0}^{t} Y(s)\dd s.
$$
The dynamic of $X$ is thus piecewise linear here, with velocity given by $Y$.
\item {\bf Jumping measure:} At time $T^{\ast -}_1$ (the time just before $T^{\ast}_1$), the process is constrained to stay inside $(-\infty,c)$ by jumping according to the $Y$-dependent measure $\nu_{Y_{T^{\ast -}_1}}$ whose support is included in $(-\infty,c)$:
$$
\forall A\in{\cal B}(-\infty,c),\quad \PP(X(T^\ast_1)\in A)=\nu_{Y_{T^{\ast -}_1}}(A).
$$
\item {\bf And so on:} Go back to step 1 in replacing $T^\ast_0$ by $T^\ast_1$ and $\xi_0$ by $\xi_1=X(T^\ast_1)$.
\end{enumerate}

An example of a trajectory of such a process is displayed in Figure \ref{fig:ex}. The process $X$ is piecewise linear and the couple $(X,Y)$ is in fact a piecewise deterministic Markov process in the sense of \cite[Section 24, p. 57]{Davis93}. We denote by $p^\ast(t)$ the number of jumps of $X$ until time $t$:
$$
p^\ast(t)=\sum_{i=1}^{\infty}1_{T^\ast_i\leq t}.
$$
\begin{assumption}\label{n:ass}
As in \cite[Assumption 24.4, p. 60]{Davis93}, for the well definition of the process, we assume that
$$
\EE(p^\ast(T))<\infty.
$$
\end{assumption}
As stated in \cite[Theorem 31.3, p. 83]{Davis93}, the process $X$ satisfies the following martingale property, which gives another insight into the dynamic of $X$ and will be useful in the sequel. Let $f:(-\infty,c)\to \R$ such that
\begin{itemize}
\item[G1)] $f$ is measurable and absolutely continuous with respect to the Lebesgue measure;
\item[G2)] $f$ is locally integrable at the boundary: for any $t\in[0,T]$,
$$
\EE\left(\sum_{T^\ast_i\leq t}|f(X(T^\ast_i))-f(X(T^{\ast-}_i))|\right)<\infty.
$$
\end{itemize}
Then the process defined for $t\in[0,T]$ by 
\begin{align*}
&f(X(t))-f(X(0))-\int_0^t f'(X(s))Y(s)\dd s\\
&\qquad-\int_0^t \int_{-\infty}^c [f(u)-f(X(s^-))]\nu_{Y(s^-)}(\dd u) p^\ast(\dd s)
\end{align*}
is a martingale with respect to the natural filtration associated to $(X,Y)$. Notice that it is quite easy to read the piecewise linear and jump behaviors of $X$ in such a writing. Let us notice that of course, by symmetry, processes with only negative slopes may be considered in this framework.
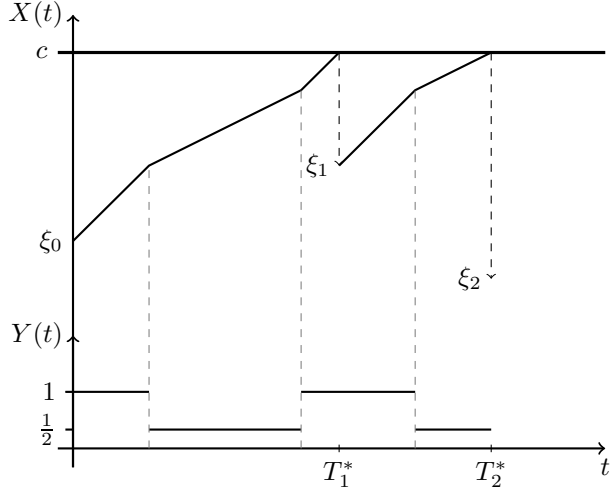
\begin{figure}
\begin{center}
\begin{tikzpicture}
\draw[thick,->](0,-1)--(0,5) node[left]{$X(t)$};
\draw[very thick](-0.2,4.5) node[left]{$c$}--(7,4.5);
\draw[thick] (0,2) node[left]{$\xi_0$}--(1,3)--(3,4)--(3.5,4.5);
\draw[dashed,->] (3.5,4.5)--(3.5,3) node[left]{$\xi_1$};
\draw[thick] (3.5,3)--(4.5,4)--(5.5,4.5);
\draw[dashed,->] (5.5,4.5)--(5.5,1.5) node[left]{$\xi_2$};
\draw[thick,->](0,-1)--(0,0.75) node[left]{$Y(t)$};
\draw[thick,->](-0.2,-0.75)--(7,-0.75) node[below]{$t$}; 
\draw[thick] (-0.1,0) --(1,0);
\draw (-0.1,0) node[left]{$1$}-- (0,0);
\draw[thick] (-0.1,-0.5) node[left]{$\frac12$}--(0,-0.5);
\draw[thick] (1,-0.5)--(3,-0.5);
\draw[thick] (3,0)--(4.5,0);
\draw[thick] (4.5,-0.5)--(5.5,-0.5);
\draw[gray,dashed] (1,-0.75)--(1,3);
\draw[gray,dashed] (3,-0.75)--(3,4);
\draw[gray,dashed] (4.5,-0.75)--(4.5,4);
\draw (3.5,-0.7)--(3.5,-0.8) node[below]{$T^\ast_1$};
\draw (5.5,-0.7)--(5.5,-0.8) node[below]{$T^\ast_2$};
\end{tikzpicture}
\caption{A trajectory of $X$ with $Y$ switching between $1/2$ and $1$.}\label{fig:ex}
\end{center}
\end{figure}

\subsection{Acceleration and averaging result}\label{sec:av:plmp}

From now on, we assume that the process of celerities, that is the continuous time Markov chain $Y$, has a fast dynamic, by introducing a (small) timescale parameter $\e$ such that
$$
\forall t\geq0,\quad Y_\e(t)=Y\left(t/\e\right).
$$
In the same time, to insure a limiting behavior, we assume that $Y$ is positive recurrent with intensity matrix $Q=(q_{zy})_{z,y\in\cal Y}$  and invariant probability measure $\pi$. For convenience, let us also define by $V$ the diagonal matrix such that $\text{diag}(V)=\{y~;~y\in{\cal Y}\}$.\\
As $\e$ goes to zero, the process $Y_\e$ converges towards the stationary state associated to $Y$ in the sense that, by the ergodic theorem,
$$
\forall t\geq0,\forall y\in{\cal Y},\quad \lim_{\e\to0}\PP(Y_\e(t)=y)=\pi(\{y\}).
$$
Therefore, as $\e$ goes to zero, the process $X_\e$, defined as $X$ by replacing $Y$ by $Y_\e$, should have its dynamic averaged with respect to the measure $\pi$. The behavior of the limiting process away from the boundary is indeed not hard to describe.
\begin{proposition}\label{prop;av:nb}
Assume that $\max\cal Y$ is finite and $\xi_0$ is deterministic. Then, for any $\eta>0$, the process $X_\e$ converges in law towards a process $\bar X$ on\\ $\mathbb{D}\left(\left[0,\frac{c-\xi_0}{\max\cal Y}-\eta\right],\R\right)$ defined as:
\begin{align*}
\bar X(t)&=\xi_0+\int_0^t\int_{\cal Y}y\pi(\dd y)\dd s\\
&=\xi_0+ \sum_{y\in\cal Y}y\pi(\{y\})t.
\end{align*}
\end{proposition}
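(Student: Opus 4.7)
The plan is to exploit the fact that, on the truncated horizon $[0,T_\eta]$ with $T_\eta := (c-\xi_0)/\max\mathcal{Y}-\eta$, no jump of $X_\e$ can occur. Indeed, since $\xi_0$ is deterministic and $Y_\e$ is bounded from above by $M := \max\mathcal{Y}<\infty$, we have
$$
\xi_0 + \int_0^t Y_\e(s)\,\dd s \;\le\; \xi_0 + tM \;<\; c
$$
for every $t\le T_\eta$, so the first jump time $T^\ast_1$ of $X_\e$ satisfies $T^\ast_1 > T_\eta$ almost surely. On $[0,T_\eta]$ the process therefore reduces to the deterministic integral
$$
X_\e(t) = \xi_0 + \int_0^t Y(s/\e)\,\dd s = \xi_0 + \e\int_0^{t/\e} Y(u)\,\dd u.
$$

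Averaging then follows from the ergodic theorem. Since $Y$ is a positive recurrent continuous time Markov chain on the countable space $\mathcal Y$ with invariant probability $\pi$, and since the function $y\mapsto y$ is bounded on $\mathcal Y$, for any initial distribution of $\zeta$ one has
$$
\frac{1}{T}\int_0^T Y(u)\,\dd u \;\xrightarrow[T\to\infty]{\text{a.s.}}\; \sum_{y\in\mathcal{Y}} y\,\pi(\{y\}).
$$
Applied with $T=t/\e$, this yields the almost sure pointwise convergence $X_\e(t) \to \bar X(t)$ as $\e\to 0$, for each fixed $t\in[0,T_\eta]$.

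The last step is to upgrade this pointwise almost sure convergence to convergence in law in $\mathbb{D}([0,T_\eta],\R)$. I expect this to be the mildest technical point rather than a real obstacle: each $X_\e$ is $M$-Lipschitz on $[0,T_\eta]$, so the family $\{X_\e\}_\e$ is equicontinuous, and the limit $\bar X$ is continuous. Choosing a countable dense subset $D\subset[0,T_\eta]$, almost sure pointwise convergence on $D$ combined with equicontinuity yields, by an Arzelà--Ascoli type argument, uniform convergence on $[0,T_\eta]$ almost surely. Since $\bar X$ is continuous, Skorokhod convergence coincides with uniform convergence on compacts, and almost sure Skorokhod convergence implies convergence in law, which concludes the proof.
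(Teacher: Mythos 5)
Your proof is correct, but it takes a genuinely more self-contained route than the paper. The paper's own proof consists of the same first observation you make---that on $\left[0,\frac{c-\xi_0}{\max\mathcal{Y}}-\eta\right]$ the process cannot reach the boundary---followed by a one-line appeal to classical averaging results for martingale problems (Kurtz, 1992), which are designed for far more general slow--fast systems and yield convergence in law directly. You instead exploit the very special structure at hand: since there is no feedback of $X_\e$ into $Y_\e$, the process is the additive functional $X_\e(t)=\xi_0+\e\int_0^{t/\e}Y(u)\,\dd u$ of a single chain $Y$ common to all $\e$, so the ergodic theorem for positive recurrent chains gives almost sure pointwise convergence on one common null set, and the uniform $M$-Lipschitz bound upgrades this to almost sure uniform convergence, hence convergence in law. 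Your argument is elementary and actually proves something stronger (a.s.\ uniform convergence under the natural coupling), at the price of being tied to this additive, feedback-free setting; the paper's citation-based proof buys generality and brevity but proves only the weak convergence stated. One cosmetic remark: it is worth saying explicitly that irreducibility of $Y$ (implicit in the paper, which later uses irreducibility of $Q$) is what guarantees the unique invariant probability $\pi$ entering the ergodic theorem.
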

\begin{proof}
On $\left[0,\frac{c-\xi_0}{\max\cal Y}-\eta\right]$ the process $X_\e$ does not reach the boundary. Then, classical averaging results apply (and apply to much more general situations, but still without boundary), see for example \cite{Kurtz92} and references therein.
\end{proof}

Proposition \ref{prop;av:nb} gives the behavior of the limiting process away from the boundary: celerities are averaged against the measure $\pi$. But what happens at boundary? This is what is characterized by our main result. 
Our main assumption is the following.
\begin{assumption}\label{main:ass}
The set $\cal Y$ is bounded from above and $ \EE(\sup_{\e\in(0,1]}p^\ast_\e(T))$ is finite.
\end{assumption}
Note that this ensure the well definition of the process for all $\e\in(0,1]$ since this assumption implies Assumption \ref{n:ass} for each such $\e$. Under the assumption that $\cal Y$ is bounded from above, an easy way to ensure that $\sup_{\e\in(0,1]} \EE(p^\ast_\e(T))$ is finite is to suppose that there is some $\rho>0$ such that $$\bigcup_{y\in{\cal Y}}\supp~\nu_y\subset (-\infty,c-\rho).$$ In such a case, $\sup_{\e\in(0,1]} p^\ast_\e(T)$ is even bounded by a deterministic constant ( which is $T\max{\cal Y}/\rho$). For convenience, let us write
$$
{\cal Y}^{-1}=\{y^{-1}~;~y\in {\cal Y}\}.
$$
\begin{theorem}\label{thm}
Under Assumption \ref{main:ass}, the process $X_\e$ converges in law in $\mathbb{D}([0,T],\R)$ towards a process $\bar X$ such that for any measurable function\\ $f:(-\infty,c)\to \R$ satisfying G1) and G2) the process defined by
\begin{align*}
&f(\bar X(t))-f(\xi_0)-\sum_{y\in\cal Y}y\pi(\{y\})\int_0^t f'(\bar X(s)) \dd s\\
&\qquad-\int_0^t \int_{-\infty}^c [f(u)-f(\bar X(s^-))]\bar\nu(\dd u) \bar p^\ast(\dd s)
\end{align*}
for $t\in[0,T]$, defined a martingale, with $\bar p^\ast$ the counting measure at the boundary for $\bar X$. The averaging measure at the boundary $\bar\nu$ is defined by
$$
\bar\nu(\dd u)=\sum_{y\in{\cal Y}}\nu_{y}(\dd u)\pi^\ast(\{1/y\})
$$
where $\pi^*$ is the invariant measure associated to the intensity matrix $V^{-1}Q$, thought as the generator of a $\mathcal{Y}^{-1}$-valued continuous time Markov chain.
\end{theorem}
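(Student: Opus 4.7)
The plan is to follow the penalty method of \cite[Section 6.4]{Kurtz90} outlined in the introduction. First I would introduce, for each $\lambda>0$, a penalized process $X^\lambda_\e$ that is no longer constrained to $(-\infty,c)$: it follows the same piecewise linear dynamics with celerity $Y_\e$, but whenever it lies in $[c,\infty)$ it jumps back down at rate $\lambda$ according to $\nu_{Y_\e(t^-)}$. This object is a genuine PDMP on $\R$ whose martingale problem is standard, and $X^\lambda_\e\to X_\e$ in law as $\lambda\to\infty$, since the instantaneous reflection is recovered by infinitely fast penalization. A deterministic time change that slows the dynamics by the factor $1/\lambda$ while above $c$ turns the rate-$\lambda$ jumps into rate-one jumps, placing the system in the classical fast/slow framework of \cite{Kurtz92}. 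Sending $\e\to 0$ there yields an averaged process whose interior dynamics have slope $\sum_{y\in{\cal Y}}y\pi(\{y\})$, as expected from Proposition \ref{prop;av:nb}; undoing the time change by sending $\lambda\to\infty$ jointly with $\e\to 0$ along a suitable diagonal yields a candidate limit $\bar X$.

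The delicate point is the identification of the boundary measure $\bar\nu$. For this I would exploit the fact that the celerities are strictly positive to reparametrize each excursion from a starting point $\xi_i$ up to $c$ by position rather than by time: between two boundary hits one has $\dd x=Y_\e(t)\dd t$, hence $\dd t=Y_\e(t)^{-1}\dd x$, and the resulting position-indexed process $x\mapsto Y_\e(\tau_\e(x))$ is a ${\cal Y}$-valued continuous time Markov chain whose generator is exactly $\e^{-1}V^{-1}Q$. Ergodicity of this chain on the fixed interval $[\xi_i,c]$, combined with its acceleration by $\e^{-1}$, implies that as $\e\to 0$ the law of $Y_\e$ at the boundary hit converges to the invariant measure of $V^{-1}Q$, which, through the bijection $y\leftrightarrow 1/y$ with ${\cal Y}^{-1}$, is precisely $\pi^\ast$. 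Mixing against the kernel $\nu_y$ then identifies the post-jump distribution as $\bar\nu(\dd u)=\sum_{y\in{\cal Y}}\pi^\ast(\{1/y\})\nu_y(\dd u)$.

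It remains to assemble tightness and to identify the limit through the martingale problem displayed in the theorem. Assumption \ref{main:ass} bounds $\EE(\sup_\e p^\ast_\e(T))$ uniformly and, combined with the boundedness of ${\cal Y}$, gives compact containment and control on the number and size of discontinuities; Aldous' criterion then yields tightness of $(X_\e)$ in $\mathbb{D}([0,T],\R)$. Any subsequential limit $\bar X$ satisfies the martingale property of the theorem by passing to the limit in the pre-limit martingale recalled before the statement, the drift term being averaged against $\pi$ on the interior by Proposition \ref{prop;av:nb} and the boundary term against $\bar\nu$ by the argument above; uniqueness of the solution to that martingale problem pins down the full limit. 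The main obstacle I anticipate is the second step, namely justifying rigorously the ergodic argument in the position variable uniformly over excursions and in the joint scaling, because the time of the boundary hit is itself biased toward large slopes; this size-biasing of $\pi$ into $\pi^\ast$ is the heart of the theorem, and making it precise at the level of the martingale formulation rather than for a single excursion is what I expect to require the most care.
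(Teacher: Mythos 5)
Your overall architecture coincides with the paper's: a penalized process that jumps downward at a fast rate when above $c$, slowed down by a time change, and the identification of $\pi^\ast$ by reparametrizing each excursion by position rather than time, so that the velocity seen at the hitting time is governed by the chain with generator $V^{-1}Q$. That second point is exactly the paper's ``mirror process'' lemma, and you have correctly isolated it as the heart of the theorem. The genuine gap is in your tightness step. You assert that compact containment plus control on the number and size of discontinuities lets Aldous' criterion ``yield tightness of $(X_\e)$''. It does not: the jumps of $X_\e$ occur at the hitting times of $\{c\}$ by a continuous increasing path, hence at predictable times, and Aldous' condition fails for such processes. Concretely, take $\tau_\e=\inf\{t:X_\e(t)\geq c-\delta y_0\}$ for some $y_0\in{\cal Y}$; on an event of probability bounded away from zero the process reaches $c$ and makes a macroscopic downward jump within time of order $\delta$ after $\tau_\e$, so $\PP\left(|X_\e(\tau_\e+\delta)-X_\e(\tau_\e)|\geq\eta\right)$ does not become small as $\delta\to0$. (Aldous' criterion is sufficient but not necessary for tightness; already the deterministic path $t\mapsto 1_{[1,\infty)}(t)$ violates it.) This is precisely why the penalty method is needed for tightness itself and not only as a device for computing the limit: after the time change the penalized process has totally inaccessible jump times with bounded intensity, so the standard criteria apply to it, and one must then \emph{transfer} tightness back to $X_\e$.

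That transfer is the second missing piece. You note that $X^\lambda_\e\to X_\e$ as $\lambda\to\infty$ for fixed $\e$ and invoke a ``suitable diagonal'' in $(\e,\lambda)$, but to conclude anything about $X_\e$ itself the comparison between $X_\e$ and $X^\lambda_\e$ must be uniform in $\e$. The paper supplies this with an explicit pathwise coupling (identical post-jump locations, the penalized process lagging behind by independent exponential overshoot times) and shows that the Wasserstein distance for the Skorokhod metric satisfies $\sup_{\e}W(X_\e,X^{\lambda}_\e)\to0$ as the penalization rate tends to infinity; without such a uniform estimate the interchange of the two limits is not justified. The remainder of your argument --- averaging the drift against $\pi$ away from the boundary, identification of $\bar\nu$ via the ergodic theorem for the position-indexed chain with generator $V^{-1}Q$, and passage to the limit in the martingale problem --- follows the paper's proof.
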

The measure $\pi^\ast$ exists since $V^{-1}Q$ is still an irreducible transition rate matrix. Let us remark that the limiting process $\bar X$ is still a piecewise linear Markov process. Indeed, the process $\bar X$ begins at $\xi_0$ and then follows the linear motion with speed $
\sum_{y\in\cal Y}y\pi(\{y\})$ until it reaches $c$ at time
$$
T^\ast_1=\frac{c-\xi_0}{\sum_{y\in\cal Y}y\pi(\{y\})}.
$$
And so on... The next hitting times of the boundary are given recursively by
$$
T^\ast_k=T^\ast_{k-1}+\frac{c-\xi_{k-1}}{\sum_{y\in\cal Y}y\pi(\{y\})},\quad k\geq1,
$$
where the $\xi_k$'s are the post jump value locations, which are independents and distributed according to the averaging measure $\bar\nu$.
\medbreak
Let us remark that it is not surprising that the value of $Y$ actually appears through $V$ in the averaging measure at the boundary since $X$ will more likely hit the boundary when its derivative is large. Of course, this fact is compensated by the probability to be in such a high speed for $Y$; this is emphasized by the presence of the intensity matrix $Q$ in the definition of $\pi^\ast$. This indicates that in a more general setting (in greater dimension for example), the scalar product between the normal and the tangent of the flow at the boundary should be involved in the expression of $\pi^\ast$.

\subsection{Extension and application to a slow-fast hybrid quadratic integrate-and-fire models}\label{sec:av:gen}

\subsubsection{Extension and reduction to piecewise linear motions}

We can handle slightly more general motions than piecewise linear in our setting. We now consider a process $(X(t),t\in[0,T])$ which obeys to the following dynamic:
\begin{enumerate}
\item{\bf Initial state:} As before, at time $T^\ast_0=0$, the process starts at $X(T^\ast_0)=\xi_0$, a random variable with support is included in $(m,c)$ where $\{c\}$ is considered as a boundary and $m<c$ is some real.
\item{\bf First jumping time:} The first hitting time of the boundary occurs at time $T^\ast_1$ defined as
$$
T^\ast_1=\inf\left\{t>0~;~\xi_0+\int_{T^\ast_0}^{t} \alpha(Y(s))F(X(s))\dd s=c\right\},
$$
where $\alpha$ is a positive measurable function such that $\alpha({\cal Y})$ is bounded from above and $F$ is a positive continuous function.
\item {\bf Piecewise deterministic motion:} For $t\in[T^\ast_0,T^\ast_1)$, we set
$$
X(t)=\xi_0+\int_{T^\ast_0}^{t} \alpha(Y(s))F(X(s))\dd s.
$$
The dynamic of $X$ is thus continuous here, and given by the differential equation:
$$
\frac{\dd X}{\dd t}(t)=\alpha(Y(t))F(X(t)),\quad X(0)=\xi_0.
$$
\item {\bf Jumping measure:} Then, at time $T^{\ast -}_1$, the process is constrained to stay inside $(m,c)$ by jumping according to the $Y$-dependent measure $\mu_{Y_{T^{\ast -}_1}}$ whose support is included in $(m,c)$:
$$
\forall A\in{\cal B}(m,c),\quad \PP(X(T^\ast_1)\in A)=\mu_{Y_{T^{\ast -}_1}}(A).
$$
\item {\bf And so on:} Go back to step 1 in replacing $T^\ast_0$ by $T^\ast_1$ and $\xi_0$ by $\xi_1=X(T^\ast_1)$.
\end{enumerate}

With $y\in\cal Y$, the simple form of the differential equation $x'=A(y)F(x)$ allows for the following reduction. Assume that $\frac1F$ is integrable over $(m,c)$ and consider the function defined, for $x\in(m,c)$, by 
$$
G(x)=\int_{m}^x\frac{\dd u}{F(u)}.
$$
Remark that $G$ is an homeomorphism from $(m,c)$ to $(0,G(c))$. The process $Z=G(X)$ is such that for any $f:(0,G(c))\to \R$ satisfying conditions G1) and G2), the process
\begin{align*}
&f(Z(t))-f(G(\xi_0))-\int_0^t f'(Z(s))\alpha(Y(s))\dd s\\
&\qquad-\int_0^t \int_m^c [f(G(u))-f(Z(s^-))]\mu_{Y(s^-)}(\dd u) p^\ast(\dd s)
\end{align*}
is a martingale with respect to the natural filtration associated to $(Z,Y)$. It is clear from this formulation that $Z$ is a piecewise linear Markov process as in Section \ref{sec:plmp}: the process $\alpha(Y)$ is still a continuous time Markov chain with intensity matrix $Q$ but valued in $\alpha(\mathcal{Y})=\{\alpha(y)~;~y\in{\cal Y}\}$ and for $y\in\cal Y$, the jumping measure at boundary is a measure on $(0,G(c))$ given by
$$
\nu_{y}(\dd u)=\mu_y(\dd G^{-1}( u)).
$$
Note also that by construction the times where $X$ and $Z$ hit there respective boundaries $\{c\}$ and $\{G(c)\}$ are equals. The function $G$ being a homeomorphism from $(m,c)$ to $(0,G(c))$, by the Portmanteau theorem we can deduce some in law properties of $X$ from the corresponding in law properties of $Z$. In particular, considering the process $X_\e$ with same law as $X$ but with $Y$ replaced by $Y_\e$, we can deduce its limiting behavior from the associated linear process $Z_\e$ and the regularity of $G$. Let us gather our assumptions.
\begin{assumption}\label{cor:ass}
We assume that
\begin{itemize}
\item $\frac1F$ is integrable over $(m,c)$,
\item $\alpha({\cal Y})$ is bounded from above,
\item $\EE(\sup_{\e\in(0,1]}p^\ast_\e(T))$ is finite, where $p^\ast_\e(T)$ is the counting measure at the boundary for the process $X_\e$.
\end{itemize}
\end{assumption}
The following theorem is a direct consequence of Theorem \ref{thm}.
\begin{theorem}
Under Assumption \ref{cor:ass}, the process $X_\e$ converges in law in $\mathbb{D}([0,T],\R)$ towards a process $\bar X$ such that for any measurable function $f:(m,c)\to \R$ satisfying G1) and G2) the process
\begin{align*}
&f(\bar X(t))-f(\xi_0)-\sum_{y\in\cal Y}\alpha(y)\pi(\{y\})\int_0^t f'(\bar X(s))F(\bar X(s)) \dd s\\
&\qquad-\int_0^t \int_{m}^c [f(u)-f(\bar X(s^-))]\bar\mu(\dd u) \bar p^\ast(\dd s)
\end{align*}
for $t\in[0,T]$, defined a martingale, with $\bar p^\ast$ the counting measure at the boundary for $\bar X$. The averaging measure at the boundary $\bar\mu$ is defined by
$$
\bar\mu(\dd u)=\sum_{y\in{\cal Y}}\mu_{y}(\dd u)\pi^\ast(\{1/y\})
$$
where $\pi^*$ is the invariant measure associated to the intensity matrix $\alpha(V)^{-1}Q$, thought as the generator of a $\alpha(\mathcal{Y})^{-1}$-valued continuous time Markov chain.
\end{theorem}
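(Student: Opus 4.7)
The strategy is to recognise this as a direct corollary of Theorem \ref{thm}, transferred through the homeomorphism $G(x)=\int_m^x \dd u/F(u)$ already introduced in the paragraphs preceding Assumption \ref{cor:ass}. The auxiliary process $Z_\e = G(X_\e)$ is, by the computation performed there, a piecewise linear Markov process of the type treated in Section \ref{sec:plmp}: its boundary is $\{G(c)\}$, its celerity chain is $\alpha(Y_\e)$ (a continuous time Markov chain on $\alpha(\mathcal{Y})$ with intensity matrix $Q$), and its jumping measure at the boundary is $\nu_y(\dd u)=\mu_y(\dd G^{-1}(u))$.

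First I would check that Assumption \ref{main:ass} holds for $Z_\e$. The set $\alpha(\mathcal{Y})$ is bounded from above by hypothesis; and because $G$ is strictly increasing and continuous, the hitting times of $\{G(c)\}$ by $Z_\e$ coincide with those of $\{c\}$ by $X_\e$, so $p^\ast_\e(T)$ is the same functional for both processes and the integrability requirement transfers. Theorem \ref{thm} then gives that $Z_\e$ converges in law in $\mathbb{D}([0,T],\R)$ to a limit $\bar Z$ characterised by its martingale problem, with averaged drift $\sum_{y}\alpha(y)\pi(\{y\})$ and boundary averaging measure $\sum_{y\in\mathcal Y}\nu_y(\dd u)\pi^\ast(\{1/\alpha(y)\})$, where $\pi^\ast$ is invariant for $\alpha(V)^{-1}Q$ on $\alpha(\mathcal Y)^{-1}$.

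The last step is to undo the change of variable. Since $G\colon(m,c)\to(0,G(c))$ is a homeomorphism, the continuous mapping theorem yields $X_\e = G^{-1}(Z_\e)\Rightarrow G^{-1}(\bar Z)=:\bar X$ in $\mathbb{D}([0,T],\R)$. To recover the announced martingale characterisation, I would plug $g=f\circ G^{-1}$ into the martingale problem of $\bar Z$ for any $f\colon(m,c)\to\R$ satisfying G1)--G2); the transfer of conditions G1)--G2) between $f$ and $g$ is immediate because $G$ is an absolutely continuous bijection whose inverse has derivative $(G^{-1})'(z)=F(G^{-1}(z))$ (since $G'=1/F$), and because jumps of $\bar Z$ correspond bijectively to those of $\bar X$. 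Then $g'(\bar Z(s))=f'(\bar X(s))F(\bar X(s))$ converts the drift term into the announced form, and the change of variable $v=G^{-1}(u)$ in the jump integral converts $\sum_y\nu_y(\dd u)\pi^\ast(\{1/\alpha(y)\})$ into $\bar\mu(\dd v)=\sum_y\mu_y(\dd v)\pi^\ast(\{1/\alpha(y)\})$, supported in $(m,c)$.

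There is no genuine obstacle: the proof is essentially bookkeeping through $G$. The only points that deserve care are the transfer of G2) (which rests on the bijection between jumps of $Z_\e$ and $X_\e$ together with the monotone continuity of $G^{-1}$), and the verification that $\pi^\ast$ is well defined, which holds because $\alpha(V)^{-1}Q$ remains an irreducible rate matrix whenever $Q$ is and $\alpha$ is positive. Everything else is a direct application of Theorem \ref{thm} combined with the continuous mapping theorem and a standard change-of-variable computation.
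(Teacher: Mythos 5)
Your proposal is correct and follows essentially the same route as the paper: the paper reduces to the piecewise linear case via the homeomorphism $G$, notes that the hitting times of the respective boundaries coincide so that Assumption \ref{main:ass} transfers, applies Theorem \ref{thm} to $Z_\e=G(X_\e)$, and pulls the conclusion back through $G^{-1}$ by continuity of the mapping on Skorokhod space. Your explicit bookkeeping of the substitution $g=f\circ G^{-1}$ and of the change of variable in the jump integral merely spells out what the paper leaves as ``a direct consequence of Theorem \ref{thm}''.
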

Here, $\alpha(V)^{-1}$ denotes the diagonal matrix such that 
$$\text{diag}(\alpha(V)^{-1})=\{1/\alpha(y)~;~y\in{\cal Y}\}.$$
Note that because of the separation of variables in the form of the flow, its value at the boundary does not appear in the expression of $\pi^\ast$, as it could be expected in more general situations.

\subsubsection{Application to a slow-fast hybrid version of a quadratic integrate-and-fire model}\label{sec:quad}

Theorem \ref{cor:ass} allows to consider other natural motions studied in the literature. For example, let us examine the following slow-fast hybrid version of a quadratic integrate-and-fire model \cite{Quad}, used in mathematical neuroscience. In such a setting, $X$ represents the membrane potential of a neural cell which is increasing until it reaches some threshold $c$, corresponding to the time where a nerve impulse is triggered, and then the potential is reset to some slower value.  For the quadratic integrate and fire model, between two jumps at the boundary $c$, the process $X$ follows the quadratic motion
$$
\frac{\dd X}{\dd t}(t)=[Y(t)X(t)]^2.
$$
In order to fix the ideas, let $Y$ be a continuous time Markov chain valued in a $\{1,2\}$ with intensity matrix given by
$$
Q=\begin{pmatrix}
-1& 1 \\
2 & -2
\end{pmatrix}
$$
such that the invariant probability measure reads
$$
\pi=\begin{pmatrix}
\frac23& \frac13 
\end{pmatrix}.
$$
For $y\in\{1,2\}$, assume that the jumping measure at the boundary $\mu_{y}$ has support $(m,c-\rho)$, for some positive constants $\rho$ and $m$ with $m+\rho<c$. The function $G$ is here given, for $x\in(m,c)$, by
$$
G(x)=\frac1m-\frac1x.
$$ 
Thus, we consider, for $t\in[0,T]$, the process
$$
Z(t)=\frac1m-\frac{1}{X(t)}.
$$
The process $Z$ is thus a piecewise linear Markov process jumping according to $Y^2$ and is constrained to the set $(0,1/m-1/c)$. The process $Y^2$ jumps at same rate as $Y$ but with state space $\{1,4\}$ instead of $\{1,2\}$. Moreover, the jump number $i$ of $Z$ is distributed according to the cumulative distribution function given, for $x\in(0,1/m-1/(c-\rho))$, by
$$
\nu_{Y_{T^{\ast -}_i}}((-\infty,x])=\PP(Z(T^\ast_1)\leq x)=\mu_{Y_{T^{\ast -}_i}}((-\infty,G^{-1}(x)]).
$$
Let us denote by $X_\e$ and $Z_\e$ the corresponding processes coupled to the process with fast dynamic $Y^2_\e$ jumping according to the intensity matrix $Q/\e$ between $1$ and $4$. The measure $\pi^\ast$ is the invariant measure on the state space $\{1/4,1\}$ associated to the intensity matrix
$$
V^{-1}Q=\begin{pmatrix} 
1& 0 \\
0 & \frac14
\end{pmatrix}
\begin{pmatrix}
-1& 1 \\
2 & -2
\end{pmatrix}=
\begin{pmatrix}
-1& 1 \\
\frac12 & -\frac12
\end{pmatrix}.
$$
That is
$$
\pi^\ast=\begin{pmatrix}
\frac13& \frac23 
\end{pmatrix}.
$$
Note that the values of $\pi$ and $\pi^\ast$ differs. According to Theorem \ref{thm}, the process $Z_\e$ converges in law in $\mathbb{D}([0,T],\R)$ towards $\bar Z$ such that for any measurable function $f : (0,1/m-1/c)\to \R$ satisfying G1) and G2) the process
\begin{align*}
&f(\bar Z(t))-f(-1/\xi_0)-\frac{4}{3}\int_0^t f'(\bar X(s)) \dd s\\
&\qquad-\int_0^t \int_{0}^{1/m-1/(c-\rho)} [f(u)-f(\bar Z(s^-))]\left[\frac13\mu_1(\dd u)+\frac23\mu_2(\dd u)\right] \bar p^\ast(\dd s)
\end{align*}
for $t\in[0,T]$, defined a martingale. As a byproduct, the process $X_\e$ converges in law in $\mathbb{D}([0,T],\R)$ towards $\bar X$ such that for any measurable function $g : (m,c)\to \R$ satisfying G1) and G2) the process
\begin{align*}
&g(\bar X(t))-g(\xi_0)-\frac{4}{3}\int_0^t g'(\bar X(s)) \bar X^2(s)\dd s\\
&\qquad-\int_0^t \int_{m}^{c-\rho} [g(u)-g(\bar X(s^-))]\left[\frac13\nu_1(\dd u)+\frac23\nu_2(\dd u)\right] \bar p^\ast(\dd s)
\end{align*}
for $t\in[0,T]$, defined a martingale.

\section{Proof of Theorem 2.1}\label{sec:proofs}

\subsection{A penalty method}

A common practice in showing tightness for constrained Markov process, is to allow the process to evolve outside of the domain for a very short time instead of having an instantaneous jump. In this line, we define a penalized process $X^P$ which is the piecewise deterministic Markov process solution of the following martingale problem. Let $k\geq1$ be an integer;  for any measurable function $f : \R\to\R$ satisfying G1) and G2), the process defined for $t\in[0,T]$ by 
\begin{align*}
&f(X^P_\e(t))-f(\xi_0)-\int_0^t f'(X^P_\e(s))Y_\e(s)\dd s\\
&\qquad-\int_0^t \int_{-\infty}^{c} [f(u)-f(X^P_\e(s))]\nu_{Y_\e(s)}(\dd u) \frac{1}{\e^k}1_{[c,\infty)}(X^P_\e(s))\dd s
\end{align*}
is a martingale. In concrete terms, the dynamic is the same as for $X_\e$ except that when beyond $c$, the process waits an exponential time of parameter $\frac{1}{\e^k}$ before jumping. The existence of such a process is inferred from its construction as a piecewise deterministic Markov process in the sense of \cite[Section 24, p. 57]{Davis93}. Due to the high intensity of jumps beyond the boundary, it is still not very comfortable to work directly on $X^P_\e$ to show its tightness. As explained in \cite[Section 6.4, p. 165]{Kurtz90}, we can slow down the process beyond the boundary to overcome this difficulty. For this purpose, we define the following random time-change, for $t\in[0,T]$,
$$
\lambda_\e(t)=\int_0^t \frac{\dd s}{1+\frac{1}{\e^k}1_{[c,\infty)}(X^P_\e(s))},\quad \mu_\e(t)=t-\lambda_\e(t).
$$
Being continuous and strictly increasing, the process $\lambda_\e$ defines a well defined time-change. Notice that $\mu_\e+\lambda_\e=\text{Id}$ and since $\mu_\e$ and $\lambda_\e$ are increasing, for any $0\leq t\leq t+h\leq T$,
$$
\lambda_\e(t+h)-\lambda_\e(t)\leq h\quad\text{and}\quad\mu_\e(t+h)-\mu_\e(t)\leq h.
$$
The increases of $\lambda_\e$ and $\mu_\e$ are thus bounded by the increases of the identity, uniformly in $\e$. The following lemma characterized the limit behavior of $\lambda_\e$.
\begin{lemma}\label{lem:lambda}
We have, in law,
$$
\lim_{\e\to0}\|\mu_\e\|_\infty=\lim_{\e\to0}\|\lambda_\e-{\rm Id}\|_{\infty}=0.
$$
\end{lemma}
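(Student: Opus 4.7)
The plan is to reduce both limits to establishing $\mu_\e(T)\to 0$ in probability as $\e\to 0$. Both $\mu_\e$ and $\lambda_\e$ are nondecreasing by construction and $\lambda_\e+\mu_\e=\mathrm{Id}$, so with $\mu_\e(0)=0$ one has
$$
\|\mu_\e\|_\infty=\mu_\e(T)\quad\text{and}\quad\|\lambda_\e-\mathrm{Id}\|_\infty=\|\mu_\e\|_\infty=\mu_\e(T).
$$
Convergence in law of a real random variable to the constant $0$ coincides with convergence in probability, so both assertions reduce to $\mu_\e(T)\to 0$ in probability.

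Unfolding the definition of $\lambda_\e$ produces the explicit expression
$$
\mu_\e(t)=\int_0^t\frac{\e^{-k}1_{[c,\infty)}(X^P_\e(s))}{1+\e^{-k}1_{[c,\infty)}(X^P_\e(s))}\,\dd s=\frac{1}{1+\e^k}\int_0^t 1_{[c,\infty)}(X^P_\e(s))\,\dd s,
$$
identifying $\mu_\e(T)$, up to the harmless factor $(1+\e^k)^{-1}$, with the Lebesgue measure of the time $X^P_\e$ spends in $[c,\infty)$ over $[0,T]$. Since by construction $X^P_\e$ jumps only from $[c,\infty)$ at the constant rate $\e^{-k}$ on that set, the jump-counting process $p^P_\e$ admits the predictable compensator $\int_0^\cdot\e^{-k}1_{[c,\infty)}(X^P_\e(s))\,\dd s$, whose expectation is finite (bounded by $T\e^{-k}$). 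Taking expectations in the compensated martingale then yields the identity
$$
\EE\Bigl(\int_0^T 1_{[c,\infty)}(X^P_\e(s))\,\dd s\Bigr)=\e^k\,\EE(p^P_\e(T)),
$$
and hence $\EE(\mu_\e(T))\le\e^k\,\EE(p^P_\e(T))$.

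It remains to bound $\EE(p^P_\e(T))$ uniformly in $\e\in(0,1]$. Between two consecutive jumps of $X^P_\e$, the process must climb from a point below $c$ up to $c$ along slopes bounded by $M:=\sup{\cal Y}$, requiring a time at least $(c-\xi^P_{i-1})/M$, where $\xi^P_{i-1}$ is the post-jump location drawn from one of the measures $\nu_y$. In the comfortable setting pointed out right after Assumption \ref{main:ass}, namely $\bigcup_{y\in{\cal Y}}\supp\,\nu_y\subset(-\infty,c-\rho)$, this immediately gives the deterministic bound $p^P_\e(T)\le TM/\rho$. In the general case one couples $X^P_\e$ and $X_\e$ on the same driving chain $Y_\e$ and the same jump-location randomness in order to transfer the uniform bound $\EE(\sup_{\e\in(0,1]}p^\ast_\e(T))<\infty$ furnished by Assumption \ref{main:ass}. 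Either way, $\EE(\mu_\e(T))=O(\e^k)\to 0$, so $\mu_\e(T)\to 0$ in $L^1$, hence in probability, hence in law.

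The delicate step is the uniform control of $\EE(p^P_\e(T))$: the exponential sojourn of $X^P_\e$ above the boundary, during which $Y_\e$ keeps evolving, breaks the pathwise identification of the jump epochs of the penalized and instantaneously constrained processes, so transferring Assumption \ref{main:ass} to the penalized version requires some bookkeeping in the coupling. Once that uniform bound is secured, the rest collapses onto the quantitative inequality above.
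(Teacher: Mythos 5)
Your proof is correct and follows essentially the same route as the paper: both reduce the statement to showing that the occupation time of $X^P_\e$ in $[c,\infty)$ over $[0,T]$ vanishes, via the same explicit identity $\mu_\e(t)=\frac{1}{1+\e^k}\int_0^t 1_{[c,\infty)}(X^P_\e(s))\,\dd s$. The only divergence is the final step: the paper argues pathwise, noting that this occupation time is a sum of at most $\sup_{\e}p^\ast_\e(T)<\infty$ (a.s.) exponential sojourns of parameter $\e^{-k}$, hence tends to $0$ in probability; you instead take expectations through the compensator of the jump-counting process to get the quantitative bound $\EE(\mu_\e(T))\le \e^k\,\EE(p^P_\e(T))=O(\e^k)$, which yields the slightly stronger $L^1$ convergence. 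Both arguments hinge on the same input --- a uniform-in-$\e$ control of the number of jumps of the \emph{penalized} process, which Assumption \ref{main:ass} states only for $X_\e$ --- and you are right to flag the transfer to $X^P_\e$ as the delicate bookkeeping point; the paper's own proof silently identifies the two jump counts, so your version is, if anything, more careful on this score.
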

\begin{proof}
For any $t\in[0,T]$,
$$
t-\lambda_\e(t)=\frac{1}{1+\e^k}\int_0^t 1_{[c,\infty)}(X^P_\e(s))\dd s\leq\frac{1}{1+\e^k}\int_0^T 1_{[c,\infty)}(X^P_\e(s))\dd s.
$$
Since $\sup_{\e\in(0,1]}p^\ast_\e(T)$ is finite $\PP$-almost-surely, the time spent beyond $c$ for $X^P_\e$ has same law as the sum of a finite number of exponential variable of parameters of order $1/\e^k$ almost-surely, yielding the result.
\end{proof}
This does not mean that $\mu_\e(\dd t)$ goes to zero. Intuitively, it should rather converge towards $p^*(\dd t)$.

Now, we define the time-change processes, for $t\in[0,\lambda^{-1}_\e(T)]$, by
$$
U_\e(t)=X^P_\e\circ \lambda_\e(t)\quad\text{and}\quad V_\e(t)=Y_\e\circ \lambda_\e(t).
$$
Notice that $\lambda^{-1}_\e(T)\geq T$. Then, for any measurable function $f : \R\to\R$ satisfying G1) and G2), the process defined by
\begin{align*}
&f(U_\e(\cdot))-f(U_\e(0))-\int_0^\cdot f'(U_\e(s))V_\e(s)\lambda_\e(\dd s)\\
&\qquad-\int_0^\cdot \int_{-\infty}^c [f(u)-f(U_\e(s))]\nu_{V_\e(s)}(\dd u)\mu_\e(\dd s)
\end{align*}
is a martingale. The process is now in an enough standard form to apply classical tightness theorems of the literature.
\begin{proposition}
For any time horizon $\tilde T$, the process $(U_\e,\lambda_\e)$ is tight for the Skorokhod topology on real càdlàg functions on $[0,\tilde T]$.
\end{proposition}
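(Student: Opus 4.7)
My plan is to establish tightness of each coordinate separately, then combine them using the continuity of $\lambda_\e$.

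\textbf{Tightness of $\lambda_\e$.} By construction $\lambda_\e$ is continuous, non-decreasing, $\lambda_\e(0)=0$, and the bound $\lambda_\e(t+h)-\lambda_\e(t)\le h$ noted just before Lemma \ref{lem:lambda} makes the family uniformly $1$-Lipschitz. Arzelà-Ascoli gives relative compactness in $C([0,\tilde T])$, hence tightness in $\mathbb{D}([0,\tilde T],\R)$.

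\textbf{Tightness of $U_\e$.} I would apply Aldous's criterion: compact containment plus oscillation control at stopping times $\tau_\e\le\sigma_\e\le\tau_\e+\delta$. The martingale problem for $U_\e$ with $f$ a truncation of the identity gives the semimartingale decomposition
\begin{align*}
U_\e(t)&=U_\e(0)+\int_0^t V_\e(s)\,\lambda_\e(\dd s)\\
&\quad+\int_0^t\!\!\int_{-\infty}^c[u-U_\e(s^-)]\,\nu_{V_\e(s^-)}(\dd u)\,\mu_\e(\dd s)+M_\e(t),
\end{align*}
with $M_\e$ a local martingale. The drift integrand is bounded by $M:=\max\mathcal{Y}<\infty$ (Assumption \ref{main:ass}) and $\lambda_\e(\dd s)\le\dd s$, so the drift increment over an interval of length $\delta$ is at most $M\delta$. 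The number of jumps of $U_\e$ on $[0,\tilde T]$ is at most $p^\ast_\e(\lambda_\e(\tilde T))\le p^\ast_\e(\tilde T)$, whose expectation is uniformly bounded by Assumption \ref{main:ass}; dominated convergence then forces the expected number of jumps falling in $[\tau_\e,\tau_\e+\delta]$ to vanish with $\delta$ uniformly in $\e$, which controls both the jump-compensator increment and, via Doob applied to its predictable quadratic variation, the martingale increment. This gives the oscillation control.

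\textbf{Joint tightness and main obstacle.} Since $\lambda_\e$ is tight in $C([0,\tilde T])$, a single Skorokhod time change witnesses tightness for both coordinates jointly (the continuous coordinate contributes only a uniformly small equicontinuity error), so $(U_\e,\lambda_\e)$ is tight in $\mathbb{D}([0,\tilde T],\R^2)$. The main technical obstacle I anticipate is compact containment for $U_\e$: since the jump measures $\nu_y$ only have support in $(-\infty,c)$, post-jump values can a priori lie arbitrarily far below $c$, so some uniform first-moment control of $\{\nu_y\}_{y\in\mathcal{Y}}$ is needed. Combined with $\EE(\sup_\e p^\ast_\e(T))<\infty$ and the drift bound $M$, this should yield a uniform-in-$\e$ probabilistic bound on $\sup_{t\le\tilde T}|U_\e(t)|$, completing the compact containment half of Aldous's criterion.
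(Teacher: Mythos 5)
Your overall strategy (Aldous's criterion applied to the time-changed process) is a legitimate alternative to what the paper does, but it is both heavier and, as written, incomplete. The paper's proof is a one-line citation of \cite[Theorem 9.4, p. 145]{EK}: one takes $f$ ranging over a dense subalgebra of \emph{bounded} test functions, observes that in the martingale decomposition of $f(U_\e(\cdot))$ the drift term is bounded by $\|f'\|_\infty\sup\mathcal{Y}$ (since $\lambda_\e(\dd s)\le \dd s$ and $\mathcal{Y}$ is bounded) and the jump-compensator term is bounded by $2\|f\|_\infty$ (since $\mu_\e(\dd s)\le \dd s$), so the ``generator'' terms are uniformly bounded and relative compactness of $\{f(U_\e)\}$ follows at once; Theorem 9.4 then upgrades this to tightness of $U_\e$ itself. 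By instead insisting on a decomposition of $U_\e$ (i.e.\ effectively taking $f=\mathrm{id}$), you force yourself to control $\int|u|\,\nu_y(\dd u)$ for the compensator and even second moments of the $\nu_y$ for the predictable quadratic variation of $M_\e$ — neither of which is assumed anywhere in the paper. Also note that the clean way to bound the probability of a jump of $U_\e$ in $[\tau_\e,\tau_\e+\delta]$ is not through $p^\ast_\e$ (which, in Assumption \ref{main:ass}, counts the jumps of $X_\e$, not of the penalized process) but through the very point of the time change: the jump intensity of $U_\e$ with respect to $\dd s$ is $\e^{-k}1_{[c,\infty)}/(1+\e^{-k}1_{[c,\infty)})\le 1$, so the probability of at least one jump in a window of length $\delta$ is at most $\EE(\mu_\e(\tau_\e+\delta)-\mu_\e(\tau_\e))\le\delta$, uniformly in $\e$ and $k$.

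The genuine gap is the one you flag yourself and then leave unresolved: compact containment. Your argument for it (``this should yield a uniform-in-$\e$ bound on $\sup_{t\le\tilde T}|U_\e(t)|$'') is not a proof, and the drift bound plus $\EE(\sup_\e p^\ast_\e(T))<\infty$ do \emph{not} suffice: a single jump governed by some $\nu_y$ with support unbounded below can send $U_\e$ arbitrarily far down, and with $\mathcal{Y}$ countably infinite there is no a priori uniformity over $y$. What is actually needed is tightness of the family $\{\nu_y\}_{y\in\mathcal{Y}}$, i.e.\ $\sup_{y\in\mathcal{Y}}\nu_y((-\infty,-K))\to0$ as $K\to\infty$ (your proposed uniform first-moment bound is sufficient but stronger than necessary). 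To be fair, the paper does not state this hypothesis either and it is equally needed for its own appeal to \cite[Theorem 9.4]{EK} (whose hypotheses include compact containment when the state space is non-compact), so you have put your finger on a real loose end; but within your write-up it remains an unproved step on which the conclusion depends.
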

\begin{proof}
Since the increases of $\lambda_\e$ and $\mu_\e$ are dominated, uniformly in $\e$, by the increases of the identity, and since $\cal Y$ is bounded, this is a direct application of  \cite[Theorem 9.4, p. 145]{EK}. 
\end{proof}
Lemma \ref{lem:lambda} identify the limit of $\lambda_\e$ as being the identity, thus a strictly increasing function. From this fact we deduce from \cite[Theorem 1.1]{Kurtz91} the tightness of the penalized process as stated below.
\begin{proposition}
For any time horizon $T$, the family $\{X^P_\e,\e\in(0,1]\}$ is tight for the Skorokhod topology on real càdlàg functions on $[0,T]$.
\end{proposition}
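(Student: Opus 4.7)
The plan is to exploit the identity $X^P_\e = U_\e \circ \lambda_\e^{-1}$, which holds since $\lambda_\e$ is continuous and strictly increasing. This reduces the problem to a random time-change argument: we want to transfer tightness from the slowed-down pair $(U_\e, \lambda_\e)$ back to $X^P_\e$ by composing with the inverse time change. The general machinery for this is precisely what \cite[Theorem 1.1]{Kurtz91} provides.

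First I would fix an enlarged horizon $\tilde T > T$, for instance $\tilde T = T+1$. Since $\lambda_\e(t) \le t$, we have $\lambda_\e^{-1}(t) \ge t$; combined with Lemma \ref{lem:lambda}, which asserts $\|\lambda_\e - \text{Id}\|_\infty \to 0$ in probability, this gives $\lambda_\e^{-1}(T) \le \tilde T$ with probability tending to one. Consequently, up to an event of vanishing probability, the map $\lambda_\e^{-1}$ sends $[0,T]$ into $[0,\tilde T]$, so the joint tightness of $(U_\e,\lambda_\e)$ on $[0,\tilde T]$ established in the preceding proposition applies on the interval actually needed.

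Next I would invoke \cite[Theorem 1.1]{Kurtz91}, whose hypotheses are tailored to this situation: if $(U_\e,\lambda_\e)$ is tight and $\lambda_\e$ converges in distribution to a strictly increasing continuous function, then the time-changed process $U_\e \circ \lambda_\e^{-1}$ is tight. Here the limit of $\lambda_\e$ is the identity, which is trivially strictly increasing and continuous, so the theorem applies and yields directly the tightness of $X^P_\e$ on $[0,T]$ in the Skorokhod topology.

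The main obstacle I expect lies in checking that subsequential weak limits of $X^P_\e$ genuinely live in $\mathbb{D}([0,T],\R)$, that is, that no pathological accumulation of jumps appears in the limit. This is exactly where Assumption \ref{main:ass} enters: the uniform integrability $\EE(\sup_{\e\in(0,1]} p^\ast_\e(T)) < \infty$ controls the total number of boundary hits uniformly in $\e$, preventing explosion of jumps and ensuring that the Skorokhod limits inherited via Kurtz's theorem are genuine càdlàg paths on $[0,T]$.
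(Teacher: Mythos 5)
Your proposal is correct and follows essentially the same route as the paper: the author likewise deduces tightness of $X^P_\e$ from the tightness of the pair $(U_\e,\lambda_\e)$ together with Lemma \ref{lem:lambda} identifying the limit of $\lambda_\e$ as the identity (a strictly increasing continuous function), via \cite[Theorem 1.1]{Kurtz91}. Your additional remarks on the enlarged horizon $\tilde T$ and the role of Assumption \ref{main:ass} are sensible elaborations of details the paper leaves implicit.
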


\subsection{Coupling and tightness for the initial process}

The aim of this part is to show that the family $\{X_\e,\e\in(0,1]\}$ is tight. To this end, we show that $X_\e$ and $X^P_\e$ are close enough such that the existence of a converging subsequence for the first one infers the existence of such a subsequence for the second one. Let us describe the coupling procedure, in emphasizing the role of $k$ by denoting $X^P_\e$ by $X^{P_k}_\e$.
\medbreak
{\bf Coupling procedure:}
\begin{itemize}
\item {\bf Glue the two processes until the first hitting time of the boundary:} the two starting points are the same: $X^\e(0)=X^{P_k}_\e(0)=\xi_0$. Then, for $t\in[0,T^{*}_{\e,1})$, $X^\e(t)=X^{P_k}_\e(t)$. Let us denote by $T^{*,P_k}_{\e,1}$ the first jumping time for $X^{P_k}_\e$.
\item {\bf The two processes jump to the same place:} As the jumping measure depends only on $Y_\e$, we can set $X_\e(T^{*}_{\e,1})=X^{P_k}_\e(T^{*,P_k}_{\e,1})$.
\item {\bf Let the two processes evolve but with same post-jump-value location:} always set $X_\e(T^{*}_{\e,i})=X^{P_k}_\e(T^{*,P_k}_{\e,i})$ for $1\leq i\leq p^\ast_\e(T)$.
\end{itemize}
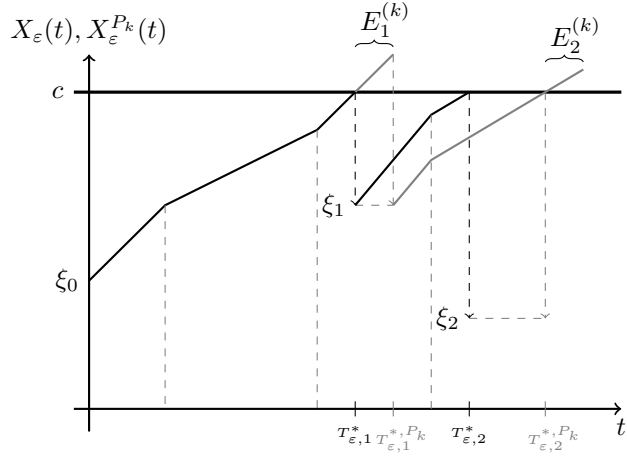
\begin{figure}
\begin{center}
\begin{tikzpicture}
\draw[thick,->](0,0)--(0,5) node[above]{$X_\e(t), X^{P_k}_\e(t)$};
\draw[very thick](-0.2,4.5) node[left]{$c$}--(7,4.5);
\draw[thick] (0,2) node[left]{$\xi_0$}--(1,3)--(3,4)--(3.5,4.5);
\draw[thick,gray] (3.5,4.5)--(4,5);
\draw[dashed,gray] (3.5,3)--(4,3);
\draw [decorate,decoration=brace] (3.5,5.1)--(4,5.1);
\draw (3.9,5.1) node[above]{$E^{(k)}_1$};
\draw[dashed,->] (3.5,4.5)--(3.5,3) node[left]{$\xi_1$};
\draw[dashed,->,gray] (4,5)--(4,3);
\draw[thick] (3.5,3)--(4.5,4.2)--(5,4.5);
\draw[thick,gray] (4,3)--(4.5,3.6)--(6,4.5)--(6.5,4.8);
\draw[dashed,->] (5,4.5)--(5,1.5) node[left]{$\xi_2$};
\draw[dashed,->,gray] (6,4.5)--(6,1.5);
\draw[dashed,gray] (5,1.5)--(6,1.5);
\draw [decorate,decoration=brace] (6,4.9)--(6.5,4.9);
\draw (6.4,4.9) node[above]{$E^{(k)}_2$};
\draw[thick,->](-0.2,0.3)--(7,0.3) node[below]{$t$}; 
\draw[dashed,gray] (1,3)--(1,0.3);
\draw[dashed,gray] (3,4)--(3,0.3);
\draw[dashed,gray] (4.5,4.2)--(4.5,0.3);
\draw (3.5,0.4)--(3.5,0.2) node[below]{\tiny$T^\ast_{\e,1}$};
\draw (5,0.4)--(5,0.2) node[below]{\tiny$T^\ast_{\e,2}$};
\draw[gray] (4,0.4)--(4,0.2) node[below]{\tiny$\quad T^{\ast,P_k}_{\e,1}$};
\draw[gray] (6,0.4)--(6,0.2) node[below]{\tiny$\quad T^{\ast,P_k}_{\e,2}$};
\end{tikzpicture}
\caption{The coupling between $X_\e$ (in black) and $X^{P_k}_\e$ (in gray). The random variables $E^{(k)}_1$ and $E^{(k)}_1$ have same law as independent and exponentially distributed random variables with parameter $1/\e^k$: they represent the time spent beyond $c$ for the process $X^{P_k}_\e$.}\label{fig:coupling}
\end{center}
\end{figure}
This coupling, illustrated in Figure \ref{fig:coupling}, has good properties, such as the fact that $X^{P_k}_\e$ always jumps after $X_\e$. Another one is emphasized in the following proposition.
\begin{lemma}\label{lem:Yk}
The probability that $Y_\e$ jumps between $T^{*}_{\e,1}$ and $T^{*,P_k}_{\e,1}$ goes to $0$ when $k$ goes to infinity.
\end{lemma}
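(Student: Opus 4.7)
The plan is to condition on the state $y=Y_\e(T^*_{\e,1})$ of the celerity at the first hitting time, and then reduce the probability in question to a standard competition between two independent exponential clocks.

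First, I would invoke the strong Markov property of $Y_\e$ at time $T^*_{\e,1}$: conditional on $Y_\e(T^*_{\e,1})=y$, the waiting time $W_y$ until the next jump of $Y_\e$ is exponentially distributed with parameter $|q_{yy}|/\e$, the factor $1/\e$ arising from the time rescaling $Y_\e(t)=Y(t/\e)$. On the other hand, by the very construction of $X^{P_k}_\e$, which jumps at rate $1/\e^k$ as soon as it crosses $c$, the length of the excursion above the boundary $E^{(k)}_1=T^{*,P_k}_{\e,1}-T^{*}_{\e,1}$ is, conditionally on the past up to time $T^*_{\e,1}$, exponentially distributed with parameter $1/\e^k$ and independent of $W_y$.

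Next, I would observe that the event of interest, namely that $Y_\e$ jumps at least once in $[T^*_{\e,1},T^{*,P_k}_{\e,1}]$, coincides with $\{W_y\le E^{(k)}_1\}$. The standard formula for the competition of two independent exponentials then yields
\[
\PP\bigl(W_y\le E^{(k)}_1\mid Y_\e(T^*_{\e,1})=y\bigr)=\frac{|q_{yy}|/\e}{|q_{yy}|/\e+1/\e^k}=\frac{|q_{yy}|\,\e^{k-1}}{|q_{yy}|\,\e^{k-1}+1}.
\]
For any fixed $\e\in(0,1)$, this quantity tends to $0$ as $k\to\infty$ since $\e^{k-1}\to0$. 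Integrating against the law of $Y_\e(T^*_{\e,1})$ and applying dominated convergence, with the trivial bound $1$ on the integrand, concludes the proof.

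The only delicate point would arise if the diagonal rates $|q_{yy}|$ were unbounded on an infinite state space ${\cal Y}$; but since the conditional probability is uniformly bounded by $1$ in $y$, dominated convergence applies with no additional assumption on $Q$. The main work of the proof is therefore just to correctly identify the two independent exponential clocks through the strong Markov property, everything else being an elementary exponential computation.
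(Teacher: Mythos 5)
Your proof is correct, and it is in fact more explicit than the one in the paper. The paper's argument uses only half of your structure: it notes that $T^{*,P_k}_{\e,1}-T^{*}_{\e,1}$ is (dominated by) an exponential variable of parameter $1/\e^k$ independent of $Y_\e$, and then concludes softly that, as $k\to\infty$, the probability of a jump of $Y_\e$ in this shrinking interval is asymptotically dominated by the probability of a jump exactly at the random time $T^{*}_{\e,1}$, which is zero. You go one step further by also invoking the strong Markov property of $Y_\e$ at the stopping time $T^{*}_{\e,1}$, turning the event into a race between two independent exponential clocks and obtaining the explicit bound $|q_{yy}|\e^{k-1}/(1+|q_{yy}|\e^{k-1})$. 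This buys two things the paper's soft argument leaves implicit: a quantitative rate in $k$, and transparency about uniformity in $\e$ (your bound is monotone in $\e$, so the convergence is uniform over $\e\in(0,\e_0)$ for any $\e_0<1$, which is exactly what is needed later in the proof of Proposition \ref{prop:W}); your remark that the bound by $1$ makes the argument insensitive to unbounded diagonal rates is also a genuine point not addressed in the paper. Two cosmetic cautions: the relevant state is $Y_\e(T^{*,-}_{\e,1})$, though this coincides almost surely with $Y_\e(T^{*}_{\e,1})$ precisely because $Y_\e$ almost surely does not jump at the hitting time; and your conclusion requires $\e<1$, which is harmless since the lemma is only used for $\e\in(0,\e_0)$ with $\e_0<1$.
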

\begin{proof}
Remark that $T^{*,P_k}_{\e,1}-T^{*}_{\e,1}$ is dominated by an exponential variable of parameter $\frac{1}{\e^k}$ which is moreover independent of $Y_\e$. Therefore, the probability that $Y_\e$ jumps between $T^{*}_{\e,1}$ and $T^{*,P_k}_{\e,1}$ is asymptotically (in $k$) dominated by the probability that $Y_\e$ jumps exactly at time $T^{*}_{\e,1}$, which is zero.
\end{proof}
This implies that for $k$ big enough, with high probability (how high depending on $k$),  after its first jump, $X^{P_k}_\e$ has same direction as $X_\e$ and, $\mathcal{Y}$ being bounded, their distance is of order an exponential variable of order $\frac{1}{\e^k}$, see Figure \ref{fig:coupling}. Then, the distance between the two processes remains the same until $X_\e$ reaches again the boundary $\{c\}$.

Let us recall that the Wasserstein distance between $X_\e$ and $X^{P_k}_\e$ is defined as
$$
W(X_\e,X^{P_k}_\e)=\inf_{\substack{A\sim X_\e\\ ~~B\sim X^{P_k}_\e}}\EE({\dd}_S(A,B)),
$$
where $\dd_S$ is the Skorokhod distance, defined, for $\Lambda$ the set of continuous one-to-one mapping of $[0,T]$, by
$$
{\dd}_S(A,B)=\inf_{\lambda\in\Lambda}\max(\|\lambda-{\rm Id}\|_\infty,\|A-B\circ\lambda\|_\infty).
$$
Thus, for the Wasserstein distance to go to zero, it is enough to find a coupling of $X_\e$ and $X^{P_k}_\e$ such that their Skorokhod distance goes to zero in expectation. In view of Lemma \ref{lem:Yk} and the fact that the number of jumps of $X_\e$ is bounded almost-surely, this is what is achieve by our coupling procedure.
\begin{proposition}\label{prop:W}
For any $\e_0 \in(0,1)$,
$$
\lim_{k\to\infty}\sup_{\e\in(0,\e_0)}W(X_\e,X^{P_k}_\e)=0.
$$ 
\end{proposition}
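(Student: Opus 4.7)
The plan is to estimate the Wasserstein distance directly through the coupling just introduced. Write $N_\e = p^\ast_\e(T)$ and, with $E^{(k)}_i$ the $\mathrm{Exp}(1/\e^k)$ lag time of $X^{P_k}_\e$ before its $i$-th jump (independent of $(X_\e,Y_\e)$ by construction), introduce the ``good event''
$$
A^{(k)}_\e = \left\{ Y_\e \text{ does not jump on any interval } [T^\ast_{\e,i}, T^{\ast,P_k}_{\e,i}], \; 1 \le i \le N_\e \right\}.
$$

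On $A^{(k)}_\e$, every lag interval leaves $Y_\e$ unchanged, so after each jump both processes restart from the same $\xi_i$ with the same slope and differ only through a deterministic time shift on each linear piece. I would construct $\lambda \in \Lambda$ equal to $t \mapsto t + \sum_{j<i} E^{(k)}_j$ on each piece $[T^\ast_{\e,i-1}, T^\ast_{\e,i}-\delta]$ and interpolating linearly with large slope on the preceding $\delta$-window so as to reach $T^{\ast,P_k}_{\e,i}$ at time $T^\ast_{\e,i}$. A direct computation gives $\|\lambda - \text{Id}\|_\infty \le \sum_i E^{(k)}_i$ and $\|X_\e - X^{P_k}_\e \circ \lambda\|_\infty \le \max \mathcal Y \cdot \max_i E^{(k)}_i + O(\delta)$; sending $\delta \to 0$ yields
$$
d_S(X_\e, X^{P_k}_\e) \le \max(1, \max \mathcal Y) \sum_{i=1}^{N_\e} E^{(k)}_i \quad \text{on } A^{(k)}_\e.
$$
Since the $E^{(k)}_i$ are independent of $N_\e$, conditioning on $N_\e$ and using Assumption \ref{main:ass} gives
$$
\EE\bigl(d_S(X_\e, X^{P_k}_\e) \cdot 1_{A^{(k)}_\e}\bigr) \le \max(1, \max \mathcal Y) \, \EE\bigl(\sup_\e p^\ast_\e(T)\bigr) \, \e_0^k,
$$
which tends to $0$ as $k \to \infty$, uniformly in $\e \in (0,\e_0)$.

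For the complementary event, the argument of Lemma \ref{lem:Yk} extends jump-by-jump: the total rate of $Y_\e$ is $O(1/\e)$ while $E^{(k)}_i$ has parameter $\e^{-k}$, so the probability of a $Y_\e$-jump in a single lag interval is at most $O(\e_0^{k-1})$ uniformly in $\e \in (0,\e_0)$. A union bound then yields $\PP((A^{(k)}_\e)^c) \le C\, \e_0^{k-1}\, \EE(\sup_\e p^\ast_\e(T))$, vanishing as $k \to \infty$. Keeping the jump-matching $\lambda$ on this event, the difference $X_\e - X^{P_k}_\e \circ \lambda$ between consecutive jumps is bounded by $\max \mathcal Y \cdot T$ because the coupling enforces identical post-jump values $\xi_i$ and both slopes lie in $\mathcal Y$; this gives the deterministic bound $d_S \le \max(1, \max \mathcal Y) \, T$ on the bad event, hence $\EE(d_S \cdot 1_{(A^{(k)}_\e)^c}) \to 0$ uniformly in $\e$.

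The main hurdle is the bad-event estimate. Because the post-jump values $\xi_i$ may be arbitrarily negative in $(-\infty,c)$, $d_S$ is not a priori bounded by a universal constant; the workaround is to retain the jump-matching $\lambda$ and exploit the fact that coupling preserves $\xi_i$ and that slopes lie in the bounded set $\mathcal Y$ to obtain the deterministic bound. Assumption \ref{main:ass} then supplies the uniform integrability needed to combine the two estimates and conclude that $\sup_{\e \in (0,\e_0)} W(X_\e, X^{P_k}_\e) \to 0$.
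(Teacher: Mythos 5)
Your overall strategy is the one the paper uses: keep the coupling, build an explicit time change that matches the jump times of $X_\e$ with those of $X^{P_k}_\e$, and bound the resulting uniform distance in expectation (the paper uses a piecewise homothety $\gamma^{(k)}_\e$ where you use a piecewise shift, and it invokes Lemma \ref{lem:Yk} only informally where you make the good/bad event split explicit). However, two of your concrete estimates do not hold as stated. First, the lag $T^{\ast,P_k}_{\e,i}-T^{\ast}_{\e,i}$ is not $\sum_{j\le i}E^{(k)}_j$, even on $A^{(k)}_\e$. Both processes are driven by $Y_\e$ at the \emph{same} clock time, so after the $i$-th jumps they differ by a piecewise-constant spatial offset $\int_{T^{\ast}_{\e,i}}^{T^{\ast,P_k}_{\e,i}}Y_\e(s)\,\dd s$, not by a time shift; the time $X^{P_k}_\e$ then needs to make up this offset before its next boundary hit is that offset divided by the value of $Y_\e$ \emph{at the time of the next approach}, which the good event does not control. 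The lags therefore compound through velocity ratios $y/y'$, and since $\mathcal Y$ is only assumed bounded above (not below), neither the monotonicity of your $\lambda$ nor the bound $\|\lambda-\mathrm{Id}\|_\infty\le\sum_i E^{(k)}_i$ follows. (The paper's claim that the homothety ratio tends to one uniformly glosses over the same point, so this is repairable at the paper's level of rigor, but your clean formula is wrong.)

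The more serious gap is the bad event. The gluing $X_\e(T^{\ast}_{\e,i})=X^{P_k}_\e(T^{\ast,P_k}_{\e,i})$ is a legitimate coupling only when the two jump measures coincide, i.e.\ when $Y_\e(T^{\ast,-}_{\e,i})=Y_\e(T^{\ast,P_k,-}_{\e,i})$ --- which is exactly what fails on $(A^{(k)}_\e)^c$. There the post-jump values must be drawn from $\nu_y$ and $\nu_{y'}$ with $y\neq y'$, so your deterministic bound ``$d_S\le\max(1,\max\mathcal Y)\,T$ because the coupling enforces identical post-jump values'' is circular: it invokes on the bad event precisely the property whose failure defines the bad event. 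Since the $\nu_y$ are only assumed supported in $(-\infty,c)$, with no uniform moment or support condition, $\EE\bigl(d_S\,1_{(A^{(k)}_\e)^c}\bigr)$ is not controlled by $\PP\bigl((A^{(k)}_\e)^c\bigr)$ alone, and your concluding uniform integrability claim is unsupported. (A further minor point: your rate bound $O(\e_0^{k-1})$ for a $Y_\e$-jump in a lag window presumes $\sup_y|q_{yy}|<\infty$, which is not among the paper's hypotheses.) To close the argument you would need either a uniform bound on $\int|u|\,\nu_y(\dd u)$ over $y\in\mathcal Y$, or an optimal coupling of $\nu_y$ and $\nu_{y'}$ with a quantitative estimate, neither of which is provided.
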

\begin{proof}
Let us use the flexibility of the Wasserstein and Skorokhod distances in considering the defined coupling together with the one-to-one mapping
$$
\gamma^{(k)}_\e(t)=\frac{T^{\ast,P_k}_{p^{\ast,P_k}_\e(t)+1,\e}+E^{(k)}_{p^{\ast,P_k}_\e(t)+1}}{T^\ast_{p^\ast_\e(t)+1,\e}}t,
$$
defined for $t\in[0,T]$, where $(E^{(k)}_i)_{i\geq1}$ are the succesive times spent beyond $c$ for the process $X^{P_k}_\e$ and thus have same law as independent exponential random variables with parameter $\frac{1}{\e^k}$. This can be seen as a homothety with random piecewise constant ratio. Remark that the map $\gamma^{(k)}_\e$ is defined such that the two processes $X_\e$ and $X^{P_k}_\e\circ\gamma^{(k)}_\e$ jumps at the same time and are glued to the same value after jumps, as illustrated in Figure \ref{fig:proof}. Quite exactly as for Lemma \ref{lem:lambda}, we can show that the piecewise constant ratio of the homothety $\gamma^{(k)}_\e$ goes to one when $k$ goes to infinity, uniformly in $t\in[0,T]$ and $\e\in(0,\e_0)$. This implies that, in expectation, the uniform distance between $X_\e$ and $X^{P_k}_\e\circ\gamma^{(k)}_\e$ goes to zero when $k$ goes to infinity, uniformly in $\e\in(0,\e_0)$. 
\end{proof}
\begin{figure}
\begin{center}
\def\tr{0}
\def\dr{-6}
\begin{tikzpicture}[scale=0.8,every node/.style={scale=0.8}]
\draw[thick,->](0,0)--(0,5);
\draw[very thick](-0.2,4.5) node[left]{$c$}--(7,4.5);
\draw[thick] (0,2) node[left]{$\xi_0$}--(1,3)--(3,4)--(3.5,4.5);
\draw[thick,gray] (3.5,4.5)--(4,5);
\draw[dashed,gray] (3.5,3)--(4,3);
\draw [decorate,decoration=brace] (3.5,5.1)--(4,5.1);
\draw (3.9,5.1) node[above]{$E^{(k)}_1$};
\draw[dashed,->] (3.5,4.5)--(3.5,3) node[left]{$\xi_1$};
\draw[dashed,->,gray] (4,5)--(4,3);
\draw[thick] (3.5,3)--(4.5,4.2)--(5,4.5);
\draw[thick,gray] (4,3)--(4.5,3.6)--(6,4.5)--(6.5,4.8);
\draw[dashed,->] (5,4.5)--(5,1.5) node[left]{$\xi_2$};
\draw[dashed,->,gray] (6,4.5)--(6,1.5);
\draw[dashed,gray] (5,1.5)--(6,1.5);
\draw [decorate,decoration=brace] (6,4.9)--(6.5,4.9);
\draw (6.4,4.9) node[above]{$E^{(k)}_2$};
\draw[thick,->](-0.2,0.3)--(7,0.3) node[below]{$t$}; 
\draw[dashed,gray] (1,3)--(1,0.3);
\draw[dashed,gray] (3,4)--(3,0.3);
\draw[dashed,gray] (4.5,4.2)--(4.5,0.3);
\draw (3.5,0.4)--(3.5,0.2) node[below]{\tiny$T^\ast_{\e,1}$};
\draw (5,0.4)--(5,0.2) node[below]{\tiny$T^\ast_{\e,2}$};
\draw[gray] (4,0.4)--(4,0.2) node[below]{\tiny$\quad T^{\ast,P_k}_{\e,1}$};
\draw[gray] (6,0.4)--(6,0.2) node[below]{\tiny$\quad T^{\ast,P_k}_{\e,2}$};
\draw[thick,->](0+\tr,0+\dr)--(0+\tr,5+\dr);
\draw[very thick](-0.2+\tr,4.5+\dr) node[left]{$c$}--(7+\tr,4.5+\dr);
\draw[thick] (0+\tr,2+\dr) node[left]{$\xi_0$}--(1+\tr,3+\dr)--(3+\tr,4+\dr)--(3.5+\tr,4.5+\dr);
\draw[dashed,->] (3.5+\tr,4.5+\dr)--(3.5+\tr,3+\dr) node[left]{$\xi_1$};
\draw[thick] (3.5+\tr,3+\dr)--(4.5+\tr,4.2+\dr)--(5+\tr,4.5+\dr);
\draw[dashed,->] (5+\tr,4.5+\dr)--(5+\tr,1.5+\dr) node[left]{$\xi_2$};
\draw[thick,gray] (0+\tr,2+\dr)--(0.5+\tr,3-0.2+\dr)--(2.5+\tr,4+\dr)--(3.5+\tr,5+\dr);
\draw[dashed,->,gray] (3.5+\tr,5+\dr)--(3.5+\tr,3+\dr);
\draw[thick,gray] (3.5+\tr,3+\dr)--(4+\tr,4+\dr)--(5+\tr,5+\dr);
\draw[dashed,->,gray] (5+\tr,5+\dr)--(5+\tr,1.5+\dr);
\draw[thick,->](-0.2+\tr,0.3+\dr)--(7+\tr,0.3+\dr) node[below]{$t$}; 
\draw (3.5+\tr,0.4+\dr)--(3.5+\tr,0.2+\dr) node[below]{\tiny$T^\ast_{\e,1}$};
\draw (3.5+\tr,-0.2+\dr) node[below]{\tiny$=$};
\draw (3.5+\tr,-0.4+\dr) node[below]{\tiny$\gamma^{(k)}_\e(T^{\ast,P_k}_{\e,1})$};
\draw (5+\tr,0.4+\dr)--(5+\tr,0.2+\dr)node[below]{\tiny$T^\ast_{\e,2}$};
\draw (5+\tr,-0.2+\dr) node[below]{\tiny$=$};
\draw (5+\tr,-0.4+\dr) node[below]{\tiny$\gamma^{(k)}_\e(T^{\ast,P_k}_{\e,2})$};
\draw [decorate,decoration=brace] (3.55+\tr,5+\dr) -- node[right]{\tiny O$(E^{(k)}_1)$}(3.55+\tr,4.5+\dr) ;
\draw [decorate,decoration=brace] (5.05+\tr,5+\dr) -- node[right]{\tiny O$(E^{(k)}_2)$}(5.05+\tr,4.5+\dr) ;
\draw[thick,dashed,->](7.25,0.5) to[bend left]  (7.25,-1.75);
\draw (8,-0.25) node[below]{$\gamma^{(k)}_\e$};
\end{tikzpicture}
\caption{Illustration of the proof of Proposition \ref{prop:W}. Top: trajectories of $X_\e$ (in black) and $X^{P_k}_\e$ (in gray). Bottom:  trajectories of $X_\e$ (in black) and $X^{P_k}_\e\circ\gamma^{(k)}_\e$ (in gray).}\label{fig:proof}
\end{center}
\end{figure}
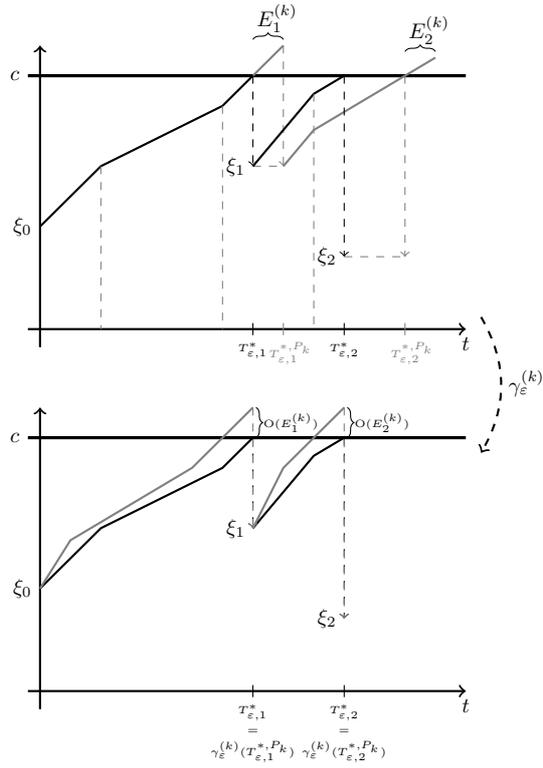
Since convergence in Wasserstein distance implies in law convergence, we can state the following proposition.
\begin{proposition}
For any time horizon $T$, the family $\{X_\e,\e\in(0,1]\}$ is tight for the Skorokhod topology on real càdlàg functions on $[0,T]$.
\end{proposition}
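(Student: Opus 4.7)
The plan is to deduce tightness of $\{X_\e\}_\e$ as an immediate corollary of the two results already in place: the tightness of the penalized family $\{X^{P_k}_\e\}_\e$ (for each fixed $k$), and the uniform Wasserstein control provided by Proposition~\ref{prop:W}. The underlying principle is that in a Polish metric space, a family of probability measures uniformly well-approximated in Wasserstein distance by tight families is itself tight.

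Quantitatively, I would fix $\eta>0$ and $\e_0\in(0,1)$. Proposition~\ref{prop:W} allows me to choose $k=k(\eta)$ such that $\sup_{\e\in(0,\e_0)}W(X_\e,X^{P_k}_\e)<\eta^2$. The explicit coupling built in the proof of that proposition, combined with Markov's inequality, then yields
$$\sup_{\e\in(0,\e_0)}\PP(d_S(X_\e,X^{P_k}_\e)>\eta)<\eta.$$
Next, the tightness of $\{X^{P_k}_\e\}_\e$ already established provides a compact $K\subset\mathbb{D}([0,T],\R)$ with $\inf_\e\PP(X^{P_k}_\e\in K)>1-\eta$. The closed $\eta$-neighborhood $K^\eta$ is totally bounded (a finite $\delta$-net of $K$ yields a finite $(\delta+\eta)$-net of $K^\eta$), and completeness of the Skorokhod metric makes its closure $\overline{K^\eta}$ compact in $\mathbb{D}([0,T],\R)$. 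A union bound then delivers $\PP(X_\e\in\overline{K^\eta})\geq1-2\eta$ uniformly in $\e\in(0,\e_0)$, which is the desired tightness on that range.

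To extend from $(0,\e_0)$ to the full range $(0,1]$, I would handle the compact subrange $[\e_0,1]$ separately: there the intensity matrix $Q/\e$ of $Y_\e$ is uniformly bounded, the piecewise-deterministic characteristics of $X_\e$ are uniformly controlled, and standard Aldous-type tightness criteria apply directly. Overall I do not expect any serious obstacle; the only mildly non-routine point is recognising that $\overline{K^\eta}$ is compact in the Polish space $\mathbb{D}([0,T],\R)$, which is a clean total-boundedness-plus-completeness argument.
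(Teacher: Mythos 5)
Your overall strategy is the same as the paper's: transfer tightness from the penalized family $\{X^{P_k}_\e\}$ to $\{X_\e\}$ via the uniform Wasserstein control of Proposition \ref{prop:W}. The paper does this in two lines at the level of a metric $\dd_{\cal L}$ metrizing convergence in law (within $\eta$ of a relatively compact family, for every $\eta$, hence relatively compact); you do it more explicitly by fattening a compact set furnished by the tightness of $\{X^{P_k}_\e\}$. Your quantitative version, including the separate treatment of the range $[\e_0,1]$ which the paper silently ignores, is if anything more careful than the published argument.

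There is, however, one step that fails as written: the claim that $\overline{K^\eta}$ is compact by ``total boundedness plus completeness of the Skorokhod metric''. The metric $\dd_S$ defined in the paper --- the one entering the Wasserstein distance $W$, hence the one your $\eta$-neighborhood is taken with respect to --- is \emph{not} complete on $\mathbb{D}([0,T],\R)$, and the $\eta$-fattening of a compact set for $\dd_S$ need not be relatively compact. Concretely, take $K=\{0\}$: its $\dd_S$-$\eta$-neighborhood contains the functions $x_n=\tfrac{\eta}{2}\,1_{[T/2,\,T/2+1/n)}$, which form a $\dd_S$-Cauchy sequence (each at distance exactly $\eta/2$ from $0$) with no limit point in $\mathbb{D}([0,T],\R)$, since the only candidate limit is the non-c\`adl\`ag function $\tfrac{\eta}{2}1_{\{T/2\}}$; so $\overline{K^\eta}$ is closed and totally bounded but not compact. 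The standard repair is to run the argument with Billingsley's equivalent complete metric $\dd^\circ$, which penalizes the logarithm of the slopes of the time changes rather than $\|\lambda-{\rm Id}\|_\infty$: closed totally bounded sets are then compact and your enlargement argument goes through verbatim. This in turn requires checking that the coupling of Proposition \ref{prop:W} is also small for $\dd^\circ$, which it is, because the time change $\gamma^{(k)}_\e$ constructed there is a piecewise homothety whose ratios tend to $1$ uniformly in $\e$, so its log-slope is controlled. Alternatively one can argue at the level of laws via Prokhorov's theorem for a complete metric on $\mathbb{D}([0,T],\R)$. The paper's own proof is equally silent on this point, but your write-up asserts the false completeness statement explicitly, so it needs this fix.
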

\begin{proof}
Let us write $\dd_{\cal L}$ for a distance metrizing convergence in law and let $\e_0\in(0,1)$ be fixed. According to Proposition \ref{prop:W}, for any $\eta>0$ we can find some $k$ such that for any $\e\in(0,\e_0)$,
$$
\dd_{\cal L}({\cal L}(X_\e),{\cal L}(X^{P_k}_\e))\leq \frac{\eta}{2}.
$$
Then, writing $\bar X$ for an accumulation point of the family $\{X^{P_k}_\e,\e\in(0,\e_0)\}$, there is some $\e\in(0,\e_0)$ such that
$$
\dd_{\cal L}({\cal L}(X^{P_k}_\e),{\cal L}(\bar X))\leq \frac{\eta}{2}.
$$
Hence the result.
\end{proof}

\subsection{Finite dimensional laws for the limit}

Let us denote by $\Lambda_\e$ the occupation measure on $[0,T]\times \mathcal{P}({\cal Y})$ defined by
$$
\Lambda_\e([0,t]\times \{y\})=\int_0^t1_{y}(Y_\e(s))\dd s.
$$
According to the ergodic theorem (recall that $Y_\e=Y(\cdot/\e)$ with $Y$ a positive recurrent continuous time Markov chain), this measure converges in law when $\e$ goes to zero to the measure $\bar\Lambda$ defined on $[0,T]\times \mathcal{P}({\cal Y})$ by
$$
\bar\Lambda([0,t]\times \{y\})=t\sum_{y\in\cal Y}y\pi(\{y\}).
$$
In the following, we denote by $(\bar X,\bar \Lambda)$ an accumulation point of the family\\$\{X_\e,\Lambda_\e ;~ \e\in(0,1)\}$. Let us recall that for any measurable function $f$ satisfying G1) and G2) the process defined, for $t\in[0,T]$, by
\begin{align*}
&f(X_\e(t))-f(\xi_0)-\int_0^t f'(X_\e(s))Y_\e(s)\dd s\\
&-\int_0^t \int_{-\infty}^c [f(u)-f(X_\e(s^-))]\nu_{Y_\e(s^-)}(\dd u) p^\ast_\e(\dd s)
\end{align*}
is a martingale. In light of  \cite[Theorem 2.1]{Kurtz92}, this is not hard to see, as in the context of averaging without constraints, that the term
$$
\int_0^t f'(X_\e(s))Y_\e(s)\dd s=\int_{[0,t]\times\cal Y} f'(X_\e(s))y\Lambda_\e(\dd s,\dd y)
$$
converges in law towards
\begin{equation}\label{eq1}
\int_{[0,t]\times\cal Y} f'(\bar X(s))y\Lambda(\dd s,\dd y)=\sum_{y\in\cal Y}y \pi(\{y\})\int_0^t f'(\bar X(s))\dd s.
\end{equation}
The integral with respect to the singular measure $p^\ast_\e$ requires a specific attention. We expand it as
\begin{align*}
&\int_0^t \int_{-\infty}^c [f(u)-f(X_\e(s^-))]\nu_{Y_\e(s^-)}(\dd u) p^\ast_\e(\dd s)\\
=&\sum_{y\in\cal Y}\sum_{i\geq1}\int_{-\infty}^c [f(u)-f(c)]\nu_{y}(\dd u)1_{Y_\e(T^{\ast,-}_{i,\e})=y~;~i\leq{p^\ast_\e(t)}}.
\end{align*}
Notice that $X_\e$ is strictly increasing in between two jumps, thus invertible in such a time window. The reciprocal process is defined until the first jumps of $X^\e$ as
$$
\frac{1}{\xi_0}+\int_0^t W_\e(s)\dd s
$$
where $W_\e=W(\cdot/\e)$ with $W$ a $\left\{1/y~;~y\in{\cal Y} \right\}$-valued continuous time Markov chain with intensity matrix $V^{-1}Q$. We thus consider a ``mirror" process $M_\e$, as illustrated in Figure \ref{fig:proof:id}, starting at time $0$ and evolving according to a continuous piecewise linear motion with speed given by $W_\e$:
$$
\forall x\geq0,\quad M_\e(x)=\int_0^x W_\e(u)\dd u.
$$
Recall that we write $\pi^\ast$ for the invariant measure associated to $W$.
\begin{lemma}\label{lem:id}
Let us denote by $\nu_{\xi_0}$ the law of the initial condition $\xi_0$ and $E_{\pi^\ast}$ the first moment of $\pi^\ast$. The sequence $(T^\ast_{\e,i},Y_\e(T^{\ast,-}_{\e,i}))_{1\leq i\leq p^\ast_\e(T)}$ converges in law when $\e$ goes to zero towards $(\bar T^\ast_{i},Z_i)_{1\leq i\leq p^\ast(T)}$, with law given, for any $k\geq1$,  any sequence of times $t_1,\ldots, t_k$ and any sequence of values $x_0,\cdots,x_k$, by:
\begin{align*}
&\PP\left(\bigcap_{i=1}^k \{\bar T^\ast_{i}\leq t_i\} \cap \{Z_i=x_i\}\cap \{p^\ast(T)=k\}\right)\\
=&\int_{(-\infty,c)^{k+1}}\nu_{\xi_0}(\dd u_0)\ldots \nu_{x_{k}}(\dd u_{k})\Pi_{i=1}^k \pi^\ast\left(\left\{1/x_i\right\}\right)\\
&\quad\times1_{\{\left(ic-\sum_{j=0}^{i-1}u_j\right)E_{\pi^\ast}\leq t_i;\left(kc-\sum_{j=0}^{k-1}u_j\right)E_{\pi^\ast}\leq T<\left((k+1)c-\sum_{j=0}^{k}u_j\right)E_{\pi^\ast}\}}.
\end{align*}
\end{lemma}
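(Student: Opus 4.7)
The key is the position-time reparametrization already set up in the statement. Since $X_\e$ is strictly increasing between jumps, conditionally on $\xi_0 = u_0$ we may rewrite
\[
T^\ast_{\e,1} = M_\e(c-u_0) = \int_0^{c-u_0} W_\e(u)\,\dd u,\qquad Y_\e(T^{\ast,-}_{\e,1}) = 1/W_\e((c-u_0)^-),
\]
so the random hitting time becomes a \emph{deterministic} position-time $c-u_0$, and the only randomness remaining is carried by the fast chain $W_\e = W(\cdot/\e)$. Classical ergodic results for $W_\e$ then apply directly.

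For the base case $k=1$, the ergodic theorem applied to $W_\e$ yields $M_\e(c-u_0) \to (c-u_0)E_{\pi^\ast}$ in probability, while the marginal convergence $W(r/\e) \Rightarrow \pi^\ast$ at the fixed position $r = c-u_0$ yields $Y_\e(T^{\ast,-}_{\e,1}) \Rightarrow Z_1$ with $\PP(Z_1 = x) = \pi^\ast(\{1/x\})$. Joint convergence of the pair is automatic since the first marginal converges to a constant. Integrating against the law of $\xi_0$ and then against the $\nu$-jumping measure at the boundary (independent of the velocity dynamics by construction) produces the factor $\nu_{\xi_0}(\dd u_0) \pi^\ast(\{1/x_1\}) \nu_{x_1}(\dd u_1)$ together with the indicator $(c-u_0)E_{\pi^\ast} \leq t_1$.

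The inductive step proceeds by the strong Markov property of the piecewise deterministic Markov process $(X_\e, Y_\e)$ at $T^\ast_{\e,i}$: the process restarts from $(\xi_i, Y_\e(T^\ast_{\e,i}))$, and applying the base-case argument to the next excursion gives the subsequent hitting time as $(c-\xi_i)E_{\pi^\ast}$ and the next pre-jump velocity as a fresh $\pi^\ast$-sample. Iterating $k$ times produces the product form $\prod_{i=1}^k \pi^\ast(\{1/x_i\})\nu_{x_i}(\dd u_i)$ with cumulative times $\bar T^\ast_i = (ic - \sum_{j=0}^{i-1} u_j) E_{\pi^\ast}$. The event $\{p^\ast_\e(T) = k\}$ is captured by sandwiching $T^\ast_{\e,k} \le T < T^\ast_{\e,k+1}$ and passing to the limit, producing the final double indicator in the statement.

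The main obstacle is justifying the asymptotic \emph{independence} of the successive pre-jump velocities $Y_\e(T^{\ast,-}_{\e,i})$. The chain $Y$ itself does not reset at boundary hits of $X$, so $W_\e$ continues from its current state across each $X$-jump; what makes the states asymptotically independent is that between two boundary hits $W_\e$ evolves over a position-time interval of length $c-\xi_i$, which is bounded below (by $\rho$ under the support condition on $\nu$ discussed after Assumption \ref{main:ass}), and over which $W$ executes $\Theta(1/\e)$ transitions. A quantitative mixing bound for $W$ on $\mathcal{Y}^{-1}$, uniform in the starting state (available because $\mathcal Y$ is bounded and $W$ is irreducible and positive recurrent with invariant law $\pi^\ast$), then ensures that the conditional law of $W_\e$ at the next boundary hit converges in total variation to $\pi^\ast$, uniformly in the past. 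This step, which must be combined with the tightness already obtained to control the number of jumps, is the delicate ingredient that turns the naive iteration into a rigorous product formula.
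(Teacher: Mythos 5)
Your proposal follows essentially the same route as the paper: the ``mirror'' reparametrization turning the random hitting times of $\{c\}$ into deterministic positions $ic-\sum_{j=0}^{i-1}u_j$ for the accelerated chain $W_\e$, followed by the ergodic theorem for $W_\e$ and integration against the jump measures. The only organizational difference is in the multi-jump case: you iterate the one-jump computation via the strong Markov property and then argue asymptotic independence of the successive pre-jump velocities through a quantitative mixing bound, whereas the paper writes the whole event $\bigcap_{i\le k}\{T^\ast_{\e,i}\le t_i\}\cap\{Y_\e(T^{\ast,-}_{\e,i})=x_i\}$ at once as an event for the single process $W(\cdot/\e)$ evaluated at the (conditionally) deterministic positions $ic-\sum_{j<i}u_j$, and passes to the limit by the ergodic theorem and dominated convergence. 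This makes your ``delicate ingredient'' simpler than you fear: once you condition on the post-jump locations $u_0,\dots,u_k$ \emph{before} taking the limit, the $k$ evaluation points of $W_\e$ are fixed and distinct (each gap $c-u_{i-1}$ is strictly positive $\nu$-almost surely, since each $\nu_y$ is supported in $(-\infty,c)$), so pointwise asymptotic independence of $(W(r_1/\e),\dots,W(r_k/\e))$ with marginals $\pi^\ast$ suffices, and dominated convergence handles the integral over the $u_j$'s; no mixing bound uniform in the past, and in particular no recourse to the $\rho$-support condition (which is only a sufficient condition mentioned after Assumption \ref{main:ass}, not part of the hypotheses of the theorem), is needed. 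With that simplification your argument matches the paper's proof.
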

\begin{proof}
We consider at first the case $k=1$. As illustrated in Figure \ref{fig:proof:id} we have
\begin{align*}
&\PP\left(\{T^\ast_{\e,1}\leq t_1\} \cap \{Y_\e(T^{\ast,-}_{\e,1})=x_1\}\cap \{p^\ast_\e(T)= 1\}\right)\\
=&\PP\bigg(\{\int_0^{c-\xi_0}W(s/\e)\dd s\leq t_1\} \cap \{W((c-\xi_0)/\e)=\frac{1}{x_1}\}\cap \{\int_0^{c-\xi_0}W(s/\e)\dd s\leq T\}\\
&\qquad\cap\{\int_0^{2c-(\xi_0+\xi_1)}W(s/\e)\dd s>T\}\bigg),
\end{align*}
where $\xi_0$ and $\xi_1$ are independents, with laws $\nu_{\xi_0}$ and $\nu_{x_1}$ on the event $\{Y_\e(T^{\ast,-}_{\e,1})=x_1\}$. Thus,
\begin{align*}
&\PP\left(\{T^\ast_{\e,1}\leq t_1\} \cap \{Y_\e(T^{\ast,-}_{\e,1})=x_1\}\cap \{p^\ast_\e(T)= 1\}\right)\\
=&\int_{(-\infty,c)^2}\nu_{\xi_0}(\dd u_0)\nu_{x_1}(\dd u_1)\PP\bigg(\{\int_0^{c-u_0}W(s/\e)\dd s\leq t_1\}\cap \{W((c-u_0)/\e)=\frac{1}{x_1}\}\\
&\qquad  \cap \{\int_0^{c-u_0}W(s/\e)\dd s\leq T\}\cap\{\int_0^{2c-(u_0+u_1)}W(s/\e)\dd s>T\}\bigg).
\end{align*}
By the ergodic theorem and dominated convergence, this latter term goes to
$$
\int_{(-\infty,c)^2}\nu_{\xi_0}(\dd u_0)\nu_{x_1}(\dd u_1)\pi^\ast\left(\{1/x_1\}\right)1_{(c-u_0)E_{\pi^\ast}\leq T<(2c-(u_0+u_1))E_{\pi^\ast}}
$$
as required. In the same line, for any $k\geq1$, for any sequence of times $t_1,\ldots, t_k$ and any sequence of values $x_0,\cdots,x_k$, considering all possible post-jump value locations, we have,
\begin{align*}
&\PP\left(\bigcap_{i=1}^k \{T^\ast_{\e,i}\leq t_i\} \cap \{Y_\e(T^{\ast,-}_{\e,i})=x_i\}\cap \{p^\ast_\e(T)= k\}\right)\\
&\PP\left(\bigcap_{i=1}^k \{T^\ast_{\e,i}\leq t_i\} \cap \{Y_\e(T^{\ast,-}_{\e,i})=x_i\}\cap \{T^\ast_{\e,k}\leq T\}\cap\{T^\ast_{\e,k+1}> T\}\right)\\
=&~\int_{(-\infty,c)^{k+1}}\nu_{\xi_0}(\dd u_0)\ldots \nu_{x_{k}}(\dd u_{k})\\
&\quad\times\PP\bigg(\bigcap_{i=1}^k \{\int_{0}^{ic-\sum_{j=0}^{i-1}u_j}W(s/\e)\dd s\leq t_i\} \cap \{W((ic-\sum_{j=0}^{i-1}u_j)/\e)=\frac{1}{x_i}\}\\
&\qquad\cap \{\int_{0}^{kc-\sum_{j=0}^{k-1}u_j}W(s/\e)\dd s\leq T\}\cap \{\int_{0}^{(k+1)c-\sum_{j=0}^{k}u_j}W(s/\e)\dd s\leq T\}\bigg).
\end{align*}
By the ergodic theorem and dominated convergence, this latter term goes to
\begin{align*}
&\int_{(-\infty,c)^{k+1}}\nu_{x_0}(\dd u_0)\ldots \nu_{x_{k}}(\dd u_{k})\Pi_{i=1}^k \pi^\ast\left(\left\{1/x_i\right\}\right)\\
&\qquad\times1_{\{\left(ic-\sum_{j=0}^{i-1}u_j\right)E_{\pi^\ast}\leq t_i;\left(kc-\sum_{j=0}^{k-1}u_j\right)E_{\pi^\ast}\leq T;\left(kc-\sum_{j=0}^{k-1}u_j\right)E_{\pi^\ast}> T\}}
\end{align*}
when $\e$ goes to zero, as required.
\end{proof}

\begin{figure}
\begin{center}
\def\tr{1.5}
\def\dr{-8.5}
\begin{tikzpicture}[scale=0.8,every node/.style={scale=0.8}]
\draw[thick,->](0,0)--(0,5) node[left]{$X_\e(t)$};
\draw[very thick](-0.2,4.5) node[left]{$c$}--(7,4.5);
\draw[thick] (0,2) node[left]{$\xi_0$}--(1,3)--(3,4)--(3.5,4.5);
\draw[dashed,->] (3.5,4.5)--(3.5,3) node[left]{$\xi_1$};
\draw[thick,gray] (3.5,3)--(4.5,4.2)--(5,4.5);
\draw[dashed,->] (5,4.5)--(5,1.5) node[left]{$\xi_2$} node[right]{$etc...$};
\draw[thick,->](-0.2,0.3)--(7,0.3) node[below]{$t$}; 
\draw[dashed,gray] (1,3)--(1,0.3);
\draw[dashed,gray] (3,4)--(3,0.3);
\draw[dashed,gray] (4.5,4.2)--(4.5,0.3);
\draw (3.5,0.4)--(3.5,0.2) node[below]{\tiny$T^\ast_{\e,1}$};
\draw (5,0.4)--(5,0.2) node[below]{\tiny$T^\ast_{\e,2}$};

\draw[->](0+\tr,1.2+\dr) --(0+\tr,7+\dr) node[above left]{$M_\e(x)$};
\draw[->](-0.2+\tr,1.5+\dr)node[below]{$0$} --(5+\tr,1.5+\dr) node[below]{$x$};
\draw[thick](2.5+\tr,1.3+\dr)node[below]{$c-\xi_0$} --(2.5+\tr,1.6+\dr);
\draw[thick](4+\tr,1.4+\dr) --(4+\tr,1.6+\dr);
\draw[thick](4.5+\tr,1.3+\dr)node[below]{$2c-(\xi_0+\xi_1)$};
\draw[thick] (0+\tr,1.5+\dr)--(1+\tr,2.5+\dr)--(2+\tr,4.5+\dr)--(2.5+\tr,5+\dr);
\draw[thick,gray] (2.5+\tr,5+\dr)--(3.5+\tr,5.83+\dr)--(4+\tr,6.66+\dr)node[right]{$etc...$};
\draw (4+\tr,6.66+\dr)node[right]{$etc...$};
\draw[very thick] (2.5+\tr,1.4+\dr)--(2.5+\tr,7+\dr);
\draw[very thick] (4+\tr,1.4+\dr)--(4+\tr,7+\dr);
\draw[dashed,gray] (0+\tr,2.5+\dr)--(1+\tr,2.5+\dr);
\draw[dashed,gray] (0+\tr,4.5+\dr)--(2+\tr,4.5+\dr);
\draw[dashed,gray] (0+\tr,5.83+\dr)--(3.5+\tr,5.83+\dr);
\draw (-0.1+\tr,5+\dr)node[left]{\tiny$T^\ast_{\e,1}$}--(0.1+\tr,5+\dr);
\draw (-0.1+\tr,6.66+\dr)node[left]{\tiny$T^\ast_{\e,2}$}--(0.1+\tr,6.66+\dr);
\draw[thick,dashed,->](7.25,0.5) to[bend left] node[ right]{mirroring} (7.25,-2);
\end{tikzpicture}
\caption{Illustration of the proof of Lemma \ref{lem:id}. Top: a trajectory of $X_\e$. Bottom: a trajectory of the ``mirror'' process $M_\e$. The process $X_\e$ hits the boundary at time corresponding to $c-\xi_0$, $2c-(\xi_0+\xi_1)$... for the mirror process $M_\e$. This shows that $X_\e$ hits the boundary for the first time with a given speed $y$ if and only if the mirror process evolves at speed $1/y$ at time $c-\xi_0$, and so on and so forth for the other hitting times of the boundary.}\label{fig:proof:id}
\end{center}
\end{figure}
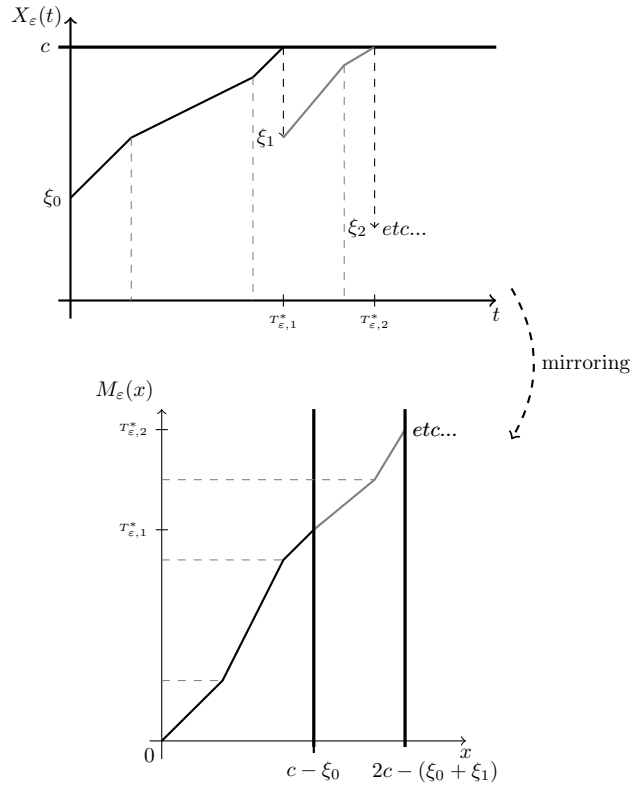

This is then routine (see the proof of \cite[Theorem 2.1]{Kurtz92}) to show that Lemma \ref{lem:id} and Equation \ref{eq1} implies that if the function $f:~(-\infty,c)\to \R$ is such that
\begin{itemize}
\item[G1)] $f$ is measurable and absolutely continuous with respect to the Lebesgue measure;
\item[G2)] $f$ is locally integrable at the boundary: for any $t\in[0,T]$,
$$
\EE\left(\sum_{T^\ast_i\leq t}|f(X(\bar T^\ast_i))-f(X(\bar T^{\ast-}_i))|\right)<\infty.
$$
\end{itemize}
then the process 
\begin{align*}
&f(\bar X(t))-f(\xi_0)-E_{\pi^\ast}\int_0^t f'(\bar X(s))\dd s\\
&-\int_0^t \sum_{y\in\cal Y}\int_{-\infty}^c [f(u)-f( \bar X(s^-)]\nu_y(\dd u)\pi^\ast\left(\left\{1/y\right\}\right)p^\ast(\dd s)
\end{align*}
is a martingale, which is precisely Theorem \ref{thm}.

\textbf{Acknowledgements.} The author is thankful to Professor François Dufour for motivating and enlightening discussions.

\end{document}